\theoremstyle{thmstyletwo}%
\newtheorem{theorem}{Theorem}
\newtheorem{remark}{Remark}%
\numberwithin{equation}{section}
\newtheorem{lemma}[theorem]{Lemma}
\newcommand{\ds}{\displaystyle}
\begin{document}

\DOI{DOI HERE}
\copyrightyear{2021}
\vol{00}
\pubyear{2021}
\access{Advance Access Publication Date: Day Month Year}
\appnotes{Paper}
\copyrightstatement{Published by Oxford University Press on behalf of the Institute of Mathematics and its Applications. All rights reserved.}
\firstpage{1}


\title[Semi-implicit method of high-index saddle dynamics]{Semi-implicit method of high-index saddle dynamics and application to construct solution landscape}

\author{Yue Luo
\address{\orgdiv{Beijing International Center for Mathematical Research},
	\orgname{Peking University}, 
	\orgaddress{\state{Beijing},   
		        \postcode{100871},
		         \country{China}}}
	         }

\author{Lei Zhang*
	\address{\orgdiv{Beijing International Center for Mathematical Research, Center for Machine Learning Research, Center for Quantitative Biology},
		\orgname{Peking University}, 
		\orgaddress{
			\state{Beijing}, 	
			\postcode{100871}, 
			\country{China}}}}

\author{Pingwen Zhang
	\address{\orgdiv{School of Mathematics and Statistics},
		\orgname{Wuhan University}, 
		\orgaddress{
			\state{Wuhan}, 
			\postcode{430072}, 
			\country{China}}
		}
	\address{\orgdiv{School of Mathematical Sciences, Laboratory of Mathematics and
			Applied Mathematics},
		\orgname{Peking University}, 
		\orgaddress{
			\state{Beijing},\postcode{100871},  
			\country{China}}}}

\author{Zhiyi Zhang
	\address{\orgdiv{School of Mathematical Sciences},
		\orgname{Peking University}, 
		\orgaddress{	
			\state{Beijing}, 
			\postcode{100871}, 
			\country{China}}}}
							         
\author{Xiangcheng Zheng
	\address{\orgdiv{School of Mathematics},
		\orgname{Shandong University}, 
		\orgaddress{
			\state{Jinan}, 
			\postcode{250100},
			\country{China}}}}

\authormark{Yue Luo et al.}

\corresp[*]{Corresponding author: \href{email:zhangl@math.pku.edu.cn}{zhangl@math.pku.edu.cn}}

\received{Date}{0}{Year}
\revised{Date}{0}{Year}
\accepted{Date}{0}{Year}


\abstract{	We analyze the semi-implicit scheme of high-index saddle dynamics, which provides a powerful numerical method for finding the any-index saddle points and constructing the solution landscape. Compared with the explicit schemes of saddle dynamics, the semi-implicit discretization relaxes the step size and accelerates the convergence, but the corresponding numerical analysis encounters new difficulties compared to the explicit scheme. Specifically, the orthonormal property of the eigenvectors at each time step could not be fully employed due to the semi-implicit treatment, and computations of the eigenvectors are coupled with the orthonormalization procedure, which further complicates the numerical analysis. We address these issues to prove error estimates of the semi-implicit scheme via, e.g. technical splittings and multi-variable circulating induction procedure. We further analyze the convergence rate of the generalized minimum residual solver for solving the semi-implicit system. Extensive numerical experiments are carried out to substantiate the efficiency and accuracy of the semi-implicit scheme in constructing solution landscapes of complex systems.}
\keywords{saddle point; saddle dynamics; solution landscape; semi-implicit scheme; error estimate; GMRES.}

\maketitle

\section{Introduction}
Searching saddle points on a complicated energy landscape is a crucial but challenging topic in computational physical and chemistry \cite{EV2010,Han2019transition, HanXu,Li2001, Mehta, Shi2022,Wal, wang2021modeling, Yin2020nucleation,YuZhe, npj2016,ZhangChe}. The saddle points can be classified by the (Morse) index, which is characterized by the maximal dimension of a subspace on which the Hessian is negative definite, according to the Morse theory \cite{Milnor}.
Most existing numerical algorithms focus on finding the index-1 saddle points, e.g. \cite{baker1986,Doye,EZho,Farr,Gao,Gou,Dimer,Lev,
	ZhaDu}. However, the computation of high-index saddle points receive less attention due to their unstable nature, despite of the fact that the number of high-index saddle points are much larger than local minima and index-1 saddle points on the complicated energy landscapes \cite{YinPRL,YinSCM} {\color{blue} and their important applications among many scientific fields. For instance, in chemical system, index-2 saddle points offer valuable information on the trajectories of chemical reactions \cite{Quapp1986, Quapp2015, Wiggins2013, Jasper2012, Quapp2004}. In quantum mechanics, excited states that have higher energy than the ground state are modeled as high-index saddle points of the energy functional with the constraint \cite{YinBE, Bao2012, bao2004}. } 

{\color{blue}High-index saddle dynamics (HiSD) is recently proposed in \cite{YinSISC} to find the any-index saddle points, which, together with the downward or upward search algorithms, is widely used to construct the solution landscapes that attracts increasing attentions due to its successful applications in various
fields such as revealing the mechanism of nucleation of quasicrystals \cite{Yin2020nucleation} and the excited states and
excitation mechanisms of the rotational Bose-Einstein condensates \cite{YinBE}. It was proved in \cite{YinSISC} that a linear stable
steady state of HiSD of index-$k$ is exactly an index-$k$ saddle point. To ensure that the numerical scheme of HiSD also converges to the same target saddle point of HiSD, it is critical to ensure that the discrete HiSD evolves along the dynamical pathway of continuous HiSD. If we only analyze the discrete HiSD without considering its difference from the continuous HiSD, there is no guarantee for the discrete HiSD to converge to the same target saddle point of HiSD. For this reason, it is important to perform error estimates for a certain numerical scheme of HiSD to ensure its dynamical pathway convergence.    }

There are some recent progresses on the numerical analysis of HiSD \cite{Z3}. In \cite{Z3}, an explicit difference scheme was proposed for the HiSD and rigorous error estimates were performed to show its first-order accuracy, which ensured the dynamical convergence of numerical solutions to the saddle dynamics and consequently supported the accurate construction of the solution landscape \cite{HanYin,YinSCM}. Since the explicit Euler scheme is adopted in \cite{Z3}, the orthonormal property of the eigenvectors at each time step has been fully employed in simplifying the derivations and analyzing the properties of the numerical solutions.
While the explicit scheme is convenient to implement, it could suffer numerical instability if the time step size is large, and thus reduces the efficiency of the saddle dynamics.

To relax the time step size and accelerate the convergence behavior, we investigate a semi-implicit scheme of HiSD. Compared with the explicit scheme, the orthonormal property of the eigenvectors at each time step could not be fully employed due to  the semi-implicit treatment, which makes the numerical analysis intricate. Furthermore, we adopt a novel computational strategy by computing the eigenvectors $\{v_i\}_{i=1}^k$ and performing the orthonormalization procedure successively for $1\leq i\leq k$,
and such coupling scheme of the eigenvectors and the orthonormalization requires more technical and complicated analysis.
All these difficulties invalidate the delicate error estimates of HiSD in \cite{Z3}. Therefore, the improved analysis such as the technical splittings and multi-variable circulating induction procedure are required to prove the error estimate of the semi-implicit scheme. Furthermore, the generalized minimum residual (GMRES) method is applied to solve the semi-implicit system, the convergence of which needs to be analyzed that is not encountered in the explicit scheme.

Motivated by these discussions, we show the error estimate of the semi-implicit scheme of HiSD and the convergence of the GMRES method. We perform various numerical experiments to show the efficiency and accuracy of the semi-implicit scheme compared with the explicit scheme. The rest of the paper is organized as follows: In Section 2 we introduce the formulation of HiSD and its semi-implicit numerical scheme, as well as proving some properties of the numerical solutions. In Section 3 we develop novel techniques to prove a key estimate to support the error estimate of the semi-implicit scheme in Section 4. In Section 5 we prove the convergence of the GMRES solver for the semi-implicit system. Numerical experiments are performed in Section 6 to substantiate the theoretical findings and compare the explicit and the semi-implicit schemes. We finally address concluding remarks in Section 7.

\section{HiSD and semi-implicit scheme}\label{sec2}
Let $E(x)$ be the energy function with $x\in\mathbb R^N$, and define $F(x)=-\nabla E(x)$ and $J(x)=-\nabla^2 E(x)$. It is clear that $J(x)=J(x)^\top$. Then the saddle dynamics for an index-$k$ saddle point of $E(x)$ with $1\leq k\in\mathbb N$ reads \cite{YinSISC}
\begin{equation}\label{sadk}
	\left\{
	\begin{array}{l}
		\ds \frac{dx}{dt} =\beta\bigg(I -2\sum_{j=1}^k v_jv_j^\top \bigg)F(x),\\[0.075in]
		\ds \frac{dv_i}{dt}=\gamma \bigg( I-v_iv_i^\top-2\sum_{j=1}^{i-1}v_jv_j^\top\bigg)J(x)v_i,~~1\leq i\leq k
	\end{array}
	\right.
\end{equation}
where $x$ represents a position variable, $v_i (i=1,..., k)$ are $k$ directional variables, and $\beta$, $\gamma>0$ are relaxation parameters. It is shown in \cite{YinSISC} that if $\{v_i(0)\}_{i=1}^k$ are orthonormal vectors, then $\{v_i(t)\}_{i=1}^k$ are orthonormal for any $t>0$.
Throughout the paper we use $Q$ to denote a generic positive constant that may assume different values at different occurrences, and make the following assumptions:

\noindent\textbf{Assumption A:} The $F(x)$ could be represented as a sum of the linear part $\mathcal Lx$ for some matrix $\mathcal L$ and the nonlinear part $\mathcal N(x)$, that is, $F(x)=\mathcal Lx+\mathcal N(x)$, and there exists a constant $L>0$ such that the following linearly growth and Lipschitz conditions hold under the standard $l^2$ norm $\|\cdot\|$ of a vector or a matrix
$$\begin{array}{c}
	\ds \max\{\|J(x_2)-J(x_1)\|,\|\mathcal Lx_2-\mathcal Lx_1\|,\|\mathcal N(x_2)-\mathcal N(x_1)\|\}\leq L\|x_2-x_1\|,\\[0.1in]
	\ds\max\{\|\mathcal Lx\|,\|\mathcal N(x)\|\}\leq L(1+\|x\|),~~x,x_1,x_2\in \mathbb R^N.
\end{array}  $$
\begin{remark}
	In Assumption A the $F$ is decomposed as a combination of the linear and nonlinear parts such that in the following semi-implicit scheme, the linear part will be treated implicitly while the nonlinear part will be treated explicitly. This semi-implicit treatment is indeed consistent with the commonly-used semi-implicit numerical methods for some nonlinear problems such as the phase-field equation \cite{CheShe} to ensure the computational stability and accuracy.
\end{remark}

Based on the Assumption A, it is shown in \cite{Z3} that $\|x(t)\|$ is bounded for $t\in [0,T]$ for a given terminal time $T$.

\subsection{Semi-implicit scheme}\label{sec21}
We consider the semi-implicit scheme of the index-$k$ saddle dynamics (\ref{sadk}) on the time interval $[0,T]$ equipped with the initial conditions
\begin{equation}\label{inicondk}
	\ds x(0)=x_0,~~v_i(0)=v_{i,0},~~v_{i,0}^\top v_{j,0}=\delta_{i,j},~~1\leq i,j\leq k.
\end{equation}
Let $0=t_0<t_1<\cdots t_N=T$ be the uniform partition of $[0,T]$ with the step size $\tau=T/N$, and let $\{x_n,v_{i,n}\}_{n=0}^N$ be the numerical solution of (\ref{sadk}). Then we discretize the first-order derivative by the Euler scheme and treat the linear and nonlinear parts on the right-hand side of (\ref{sadk}) via the implicit and explicit manner, respectively, to obtain the semi-implicit scheme of (\ref{sadk}) for $1\leq n\leq N$ as follows
\begin{equation}\label{FDsadk}
	\left\{
	\begin{array}{l}
		\ds x_{n} =x_{n-1}+\tau\beta\bigg(I -2\sum_{j=1}^k v_{j,n-1}v_{j,n-1}^\top \bigg)\big(\mathcal Lx_n+\mathcal N(x_{n-1})\big),\\[0.075in]
		\left.
		\begin{array}{l}
			\ds \tilde v_{i,n}=v_{i,n-1}+\tau\gamma\bigg( I-2\sum_{j=1}^{i-1}v_{j,n}v_{j,n}^\top\bigg)J(x_{n})\tilde v_{i,n}\\[0.1in]
			\ds\hspace{1.5in}-\tau\gamma v_{i,n-1}v_{i,n-1}^\top J(x_{n})v_{i,n-1},\\
			\ds  v_{i,n}=\frac{1}{Y_{i,n}}\bigg(\ds\tilde v_{i,n}-\sum_{j=1}^{i-1}(\tilde v_{i,n}^\top v_{j,n})v_{j,n}\bigg),
		\end{array}
		\right\}
		~~ 1\leq i\leq k.
	\end{array}
	\right.
\end{equation}
Here the last equation represents the Gram-Schmidt orthonormalization in order to maintain the orthonormal property of the vectors as in the continuous case, and thus $Y_{i,n}$ represents the norm of the vector in $(\cdots)$, i.e.,
$$\begin{array}{rl}
	\ds Y_{i,n}:\hspace{-0.1in}&\ds=\bigg\|\tilde v_{i,n}-\sum_{j=1}^{i-1}(\tilde v_{i,n}^\top v_{j,n})v_{j,n}\bigg\|=\bigg(\|\tilde v_{i,n}\|^2-\sum_{j=1}^{i-1}(\tilde v_{i,n}^\top v_{j,n})^2\bigg)^{1/2}.
\end{array} $$

{\color{blue}
\begin{remark}
The purpose of the Gram-Schmidt orthonormalization aims to ensure the orthonormality of directional vectors as in the continuous HiSD, that is, ensure the Stiefel manifold constraint \cite{Haibook}. From the later point of view, one may also apply the projection method for differential equations on manifold \cite{Hai,Haibook} to retract the dynamics of directional vectors in (\ref{FDsadk}) to the Stiefel manifold at each time step, and the error estimate could be immediately obtained by directly applying the conclusions in \cite[Section IV.4]{Haibook}. However, according to \cite[Example 4.6]{Haibook}, computing the projection for the current case requires to perform the singular value decomposition for the matrix concatenated by $\{\tilde v_{i,1},\cdots,\tilde v_{i,n}\}$ at each time step $t_n$, which could be computationally expensive. Thus we instead adopt the easy-to-implement approach (i.e. the Gram-Schmidt process) in the scheme, which, however, is no longer the projection to the Stiefel manifold such that the conclusions in \cite[IV.4]{Haibook} could not be applied to reach error estimates that motivates the current study.
\end{remark}
}

By Assumption A and $\|v_{j,n-1}\|=1$, we could apply the discrete Gronwall inequality to derive from the first equation of (\ref{FDsadk}) that $\|x_n\|$ is bounded by some constant $Q_x$ for $1\leq n\leq N$, which, based on the scheme of $\tilde v_{i,n}$, implies the boundedness of $\|\tilde v_{i,n}\|$ for $\tau$ small enough. Note that the boundedness of $x_n$ and $\tilde v_{i,n}$ further implies
\begin{equation}\label{simp}
	\begin{array}{l}
		\|\tilde v_{i,n}-v_{i,n-1}\|\\
		\ds\quad=\bigg\|\tau\gamma\bigg( I-2\sum_{j=1}^{i-1}v_{j,n}v_{j,n}^\top\bigg)J(x_{n})\tilde v_{i,n}-\tau\gamma v_{i,n-1}v_{i,n-1}^\top J(x_{n})v_{i,n-1}\bigg\|\leq Q\tau.
	\end{array}
\end{equation}

Compared with the explicit scheme presented in \cite{Z3}
\begin{equation}\label{schemeexp}
	\left\{
	\begin{array}{l}
		\ds x_{n} =x_{n-1}+\tau\beta\bigg(I -2\sum_{j=1}^k v_{j,n-1}v_{j,n-1}^\top \bigg)F(x_{n-1}),\\[0.075in]
		\ds \tilde v_{i,n}=v_{i,n-1}+\tau\gamma\bigg( I-v_{i,n-1}v_{i,n-1}^\top\\
		\ds\hspace{1in}-2\sum_{j=1}^{i-1}v_{j,n-1}v_{j,n-1}^\top\bigg)J(x_{n-1})v_{i,n-1},~~1\leq i\leq k,\\
		\ds  v_{i,n}=\frac{1}{Y_{i,n}}\bigg(\ds\tilde v_{i,n}-\sum_{j=1}^{i-1}(\tilde v_{i,n}^\top v_{j,n})v_{j,n}\bigg),~~1\leq i\leq k,
	\end{array}
	\right.
\end{equation}
the schemes and the computational strategies have salient differences:
\begin{itemize}
	\item[(a)] In the explicit scheme (\ref{schemeexp}), all variables on the right-hand side of the equations take the values at the previous time step $t_{n-1}$. In this way, the orthonormal property of the vectors $\{v_{i,n-1}\}_{i=1}^k$ at the time step $t_{n-1}$ could be fully employed to facilitate the numerical analysis as performed in \cite{Z3,Zhang2023}. However, in (\ref{FDsadk}) the linear parts on the right-hand side of the equations of vectors are treated implicitly, for which the orthonormal property could not be applied directly and thus the error estimate could be significantly complicated.
	
	\item[(b)] In the explicit scheme (\ref{schemeexp}), the schemes of the vectors $\{\tilde v_{i,n}\}_{i=1}^k$ are firstly solved, and then their orthonormalization are independently performed. In the semi-implicit scheme (\ref{FDsadk}), the computational strategy is different in that the scheme of the vector $\tilde v_{i,n}$ in the second equation and its orthonormalization with $\{v_{j,n}\}_{j=1}^{i-1}$ in the third equation are solved consecutively for $i=1,\cdots,k$. In this way, the orthonormalized vectors $\{v_{j,n}\}_{j=1}^{i-1}$ at the current time step $t_n$ serve as inputs in the scheme of the vector $\tilde v_{i,n}$, which could be more appropriate than using the vectors at the previous time step in the explicit scheme. However, this computational strategy leads to the coupling of the schemes of $\{\tilde v_{i,n}\}$ and the orthonormalization procedure, which makes the numerical analysis more intricate.
\end{itemize}

Concerning these difficulties, we derive novel analysis methods to carry out error estimates in subsequent sections.

\subsection{Auxiliary estimates}\label{sec22}
We prove several auxiliary estimates to support the error estimates.
\begin{lemma}\label{lemm1}
	For $1\leq m<i\leq k$, there exist positive constants $Q_0$ and $Q_1$, which are independent from $m$, $i$ and $n$, such that the following estimate holds
	$$\|\tilde v_{i,n}^\top \tilde v_{m,n}\|\leq Q_0\tau\sum_{l=1}^m\|\tilde v_{l,n}-v_{l,n}\|+Q_1\tau^2. $$
\end{lemma}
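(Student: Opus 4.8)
The plan is to substitute the defining relation of $\tilde v_{i,n}$ from (\ref{FDsadk}) into the first argument of the inner product $\tilde v_{i,n}^\top\tilde v_{m,n}$. Writing $A_{i,n}:=I-2\sum_{j=1}^{i-1}v_{j,n}v_{j,n}^\top$ and $J_n:=J(x_n)$, this produces three terms:
$$\tilde v_{i,n}^\top\tilde v_{m,n}=v_{i,n-1}^\top\tilde v_{m,n}+\tau\gamma\,(A_{i,n}J_n\tilde v_{i,n})^\top\tilde v_{m,n}-\tau\gamma\,(v_{i,n-1}^\top J_nv_{i,n-1})(v_{i,n-1}^\top\tilde v_{m,n}).$$
Before estimating I would record the elementary facts already available: $\|x_n\|,\|\tilde v_{i,n}\|,\|J_n\|\le Q$ and $\|A_{i,n}\|\le Q$ (in fact $A_{i,n}$ is a reflection, so $\|A_{i,n}\|=1$), the orthonormality of $\{v_{j,n}\}$, the bound $\|\tilde v_{i,n}-v_{i,n-1}\|\le Q\tau$ from (\ref{simp}), and, setting $e_{j,n}:=\tilde v_{j,n}-v_{j,n}$, the consequences $\|v_{j,n}-v_{j,n-1}\|\le\|e_{j,n}\|+Q\tau$ and $|v_{i,n-1}^\top v_{j,n}|\le\|e_{j,n}\|+Q\tau$ whenever $i\ne j$, the latter using $v_{i,n-1}^\top v_{j,n-1}=0$.

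For the first term I would also insert the defining relation of $\tilde v_{m,n}$; since $m\ne i$ the zeroth-order pieces $v_{i,n-1}^\top v_{m,n-1}$ vanish, and so does the rank-one piece of the $\tilde v_{m,n}$-equation, leaving $v_{i,n-1}^\top\tilde v_{m,n}=\tau\gamma\,v_{i,n-1}^\top A_{m,n}J_n\tilde v_{m,n}$, which is $O(\tau)$; hence the third term above is $O(\tau^2)$. For the second term I use symmetry of $J_n$ and $A_{i,n}$ to rewrite it as $\tau\gamma\,\tilde v_{i,n}^\top J_nA_{i,n}\tilde v_{m,n}$, and expand $A_{i,n}\tilde v_{m,n}$ via $\tilde v_{m,n}=v_{m,n}+e_{m,n}$ and $v_{j,n}^\top v_{m,n}=\delta_{jm}$ for $j<i$ (this is where $m\le i-1$ enters): this gives $A_{i,n}\tilde v_{m,n}=-v_{m,n}+e_{m,n}-2\sum_{j<i}v_{j,n}(v_{j,n}^\top e_{m,n})$, so the second term equals $-\tau\gamma\,\tilde v_{i,n}^\top J_nv_{m,n}$ up to a remainder bounded by $Q\tau\|e_{m,n}\|$.

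The decisive step is to see that the two surviving $O(\tau)$ quantities, $\tau\gamma\,v_{i,n-1}^\top A_{m,n}J_n\tilde v_{m,n}$ and $-\tau\gamma\,\tilde v_{i,n}^\top J_nv_{m,n}$, cancel to leading order, because together they discretize $\tau\frac{d}{dt}(v_i^\top v_m)=\tau\gamma(-v_m^\top Jv_i+v_i^\top Jv_m)=0$. Concretely, in the first quantity I replace $A_{m,n}$ by $I$ (the discrepancy $2\sum_{j<m}(v_{i,n-1}^\top v_{j,n})v_{j,n}^\top$ is controlled by $\sum_{j<m}\|e_{j,n}\|+Q\tau$ since $j<m<i$), then $\tilde v_{m,n}$ by $v_{m,n}$ (discrepancy $O(\|e_{m,n}\|)$); in the second I replace $\tilde v_{i,n}$ by $v_{i,n-1}$ (discrepancy $O(\tau)$ by (\ref{simp})); after these substitutions both reduce to $\pm\tau\gamma\,v_{i,n-1}^\top J_nv_{m,n}$ and cancel. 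Gathering all remainders yields $|\tilde v_{i,n}^\top\tilde v_{m,n}|\le Q\tau\sum_{l=1}^m\|e_{l,n}\|+Q\tau^2$, i.e. the claim with $Q_0=Q_1=Q$; since $Q$ depends only on $k,\beta,\gamma,T,L$ and the data, it is independent of $m,i,n$ as required.

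I expect the main obstacle to be precisely this cancellation: each of the two terms $\tau\gamma\,v_{i,n-1}^\top A_{m,n}J_n\tilde v_{m,n}$ and $-\tau\gamma\,\tilde v_{i,n}^\top J_nv_{m,n}$ is only $O(\tau)$ on its own, so a term-by-term bound gives $O(\tau)$ rather than the asserted $O(\tau^2)+O(\tau\sum\|e_{l,n}\|)$. One has to recognize the pairing as the discrete analogue of the continuous identity $\frac{d}{dt}(v_i^\top v_m)=0$, and then carry out the approximations ($A_{m,n}\to I$, $\tilde v_{m,n}\to v_{m,n}$, $\tilde v_{i,n}\to v_{i,n-1}$) in an order that keeps the cancellation manifest while charging every discrepancy either to $\tau^2$ or to $\tau$ times a partial sum of the $\|e_{l,n}\|$; the accompanying bookkeeping, ensuring that no constant or summation range secretly depends on $m$, $i$ or $n$, is the part that needs care.
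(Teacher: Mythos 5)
Your proposal is correct and follows essentially the same route as the paper's proof: both expand $\tilde v_{i,n}^\top\tilde v_{m,n}$ using the definitions, exploit the orthonormality of $\{v_{j,n}\}$ together with the crucial fact that $m$ lies in the range $\{1,\dots,i-1\}$ so that $A_{i,n}v_{m,n}=-v_{m,n}$ (the paper realizes the same cancellation by isolating the $j=m$ term of the sum in its $A_2$), use the splitting $\tilde v_{l,n}-v_{l,n}$ together with (\ref{simp}) to control every discrepancy, and observe the leading-order cancellation inherited from $\frac{d}{dt}(v_i^\top v_m)=0$. The only difference is cosmetic bookkeeping — you expand $\tilde v_{i,n}$ into the left factor first and insert the $\tilde v_{m,n}$ equation only where needed, whereas the paper expands both simultaneously into $A_1,\dots,A_4$ — and the two organizations produce the same remainders.
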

\begin{proof}
	By the definitions of $\tilde v_{i,n}$ and $\tilde v_{m,n}$ we have
	\begin{equation}\label{AA}
		\begin{array}{l}
			\ds \tilde v_{i,n}^\top \tilde v_{m,n}=\tau\gamma\bigg(v_{m,n-1}^\top J(x_n)\tilde v_{i,n}-2\sum_{j=1}^{i-1}v_{m,n-1}^\top v_{j,n}v_{j,n}^\top J(x_n)\tilde v_{i,n}\\
			\ds\qquad+\tilde v_{m,n}^\top J(x_n)^\top v_{i,n-1}-2\sum_{j=1}^{m-1}v_{j,n}^\top v_{i,n-1} v_{j,n}^\top J(x_n)\tilde v_{m,n}\bigg)+O(\tau^2)\\
			\ds\qquad\qquad=:\sum_{l=1}^4 A_l+O(\tau^2).
		\end{array}
	\end{equation}
	By $v_{m,n}^\top v_{j,n}=\delta_{m,j}$ we rewrite $A_2$ as
	\begin{equation}\label{conc1} \begin{array}{l}
			\ds A_2=-2\tau\gamma\sum_{j=1}^{i-1}v_{m,n-1}^\top v_{j,n}v_{j,n}^\top J(x_n)\tilde v_{i,n}\\
			\ds\quad~
			=-2\tau\gamma\sum_{j=1}^{i-1}(v_{m,n-1}^\top-v_{m,n}^\top) v_{j,n}v_{j,n}^\top J(x_n)\tilde v_{i,n}-2\tau\gamma v_{m,n}^\top J(x_n) \tilde v_{i,n},
	\end{array}  \end{equation}
	which leads to
	\begin{equation}\label{conc2} \begin{array}{rl}
			\ds A_1+A_2+A_3\hspace{-0.1in}&\ds=\tau\gamma\big(v_{m,n-1}^\top J(x_n)\tilde v_{i,n}-v_{m,n}^\top J(x_n) \tilde v_{i,n} \big)\\[0.1in]
			&\ds\hspace{-0.5in}+\tau\gamma\big(\tilde v_{m,n}^\top J(x_n)^\top v_{i,n-1}-v_{m,n}^\top J(x_n) \tilde v_{i,n}\big)\\[0.1in]
			&\ds\hspace{-0.5in}-2\tau\gamma\sum_{j=1}^{i-1}(v_{m,n-1}^\top-v_{m,n}^\top) v_{j,n}v_{j,n}^\top J(x_n)\tilde v_{i,n}=:B_1+B_2+B_3.
	\end{array} \end{equation}
	By the splitting
	\begin{equation}\label{vsplit}
		v_{m,n-1}-v_{m,n}=(v_{m,n-1}-\tilde v_{m,n})+(\tilde v_{m,n}-v_{m,n})
	\end{equation}
	and (\ref{simp}),
	$B_1$ could be bounded as
	$$|B_1|=\tau\gamma|(v_{m,n-1}^\top -v_{m,n}^\top) J(x_n) \tilde v_{i,n}|\leq Q\tau^2+Q\tau \|\tilde v_{m,n}-v_{m,n}\|.$$
	In a similar manner we bound $B_3$ by
	$$\begin{array}{l}
		\ds |B_3|=2\tau\gamma\bigg|\sum_{j=1}^{i-1}(v_{m,n-1}^\top-v_{m,n}^\top) v_{j,n}v_{j,n}^\top J(x_n)\tilde v_{i,n}\bigg|\leq Q\tau^2+Q\tau \|\tilde v_{m,n}-v_{m,n}\|.
	\end{array} $$
	We then apply the symmetry of $J(x_n)$ to bound $B_2$ as
	$$\begin{array}{l}
		\ds|B_2|=\tau\gamma|(\tilde v_{m,n}^\top-v_{m,n}^\top) J(x_n) v_{i,n-1}+v_{m,n}^\top J(x_n) (v_{i,n-1}-\tilde v_{i,n})|\\[0.05in]\ds\quad\quad \leq Q\tau^2+Q\tau \|\tilde v_{m,n}-v_{m,n}\|.
	\end{array}   $$
	
	We finally bound $A_4$ as
	$$\begin{array}{rl}
		\ds|A_4|\hspace{-0.1in}&\ds=\bigg|2\tau\gamma\sum_{j=1}^{m-1}v_{j,n}^\top v_{i,n-1} v_{j,n}^\top J(x_n)\tilde v_{m,n}\bigg|\\[0.1in]
		&\ds=\bigg|2\tau\gamma\sum_{j=1}^{m-1}(v_{j,n}^\top-\tilde v_{j,n}^\top+\tilde v_{j,n}^\top-v_{j,n-1}^\top) v_{i,n-1} v_{j,n}^\top J(x_n)\tilde v_{m,n}\bigg|\\
		&\ds\leq Q\tau^2+Q\tau\sum_{j=1}^{m-1}\|v_{j,n}-\tilde v_{j,n}\|.
	\end{array}  $$
	We incorporate the preceding estimates to complete the proof.
\end{proof}

\begin{lemma}\label{lemm2}
	For $1\leq j\leq k$, there exist positive constants $Q_2$ and $Q_3$, which are independent from $j$ and $n$, such that the following estimate holds
	$$\big|\|\tilde v_{j,n}\|^2-1\big|\leq Q_2\tau\sum_{l=1}^{j-1}\|\tilde v_{l,n}-v_{l,n}\|+Q_3\tau^2. $$
\end{lemma}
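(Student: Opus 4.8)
The plan is to mirror the $O(\tau^2)$ bookkeeping used in the proof of Lemma~\ref{lemm1}. I would start from the defining relation for $\tilde v_{j,n}$ in the second equation of (\ref{FDsadk}), written as $\tilde v_{j,n}=v_{j,n-1}+\tau\gamma\, w_{j,n}$ with
$$w_{j,n}=\Big(I-2\sum_{l=1}^{j-1}v_{l,n}v_{l,n}^\top\Big)J(x_n)\tilde v_{j,n}-v_{j,n-1}v_{j,n-1}^\top J(x_n)v_{j,n-1},$$
and expand $\|\tilde v_{j,n}\|^2=\|v_{j,n-1}\|^2+2\tau\gamma\, v_{j,n-1}^\top w_{j,n}+\tau^2\gamma^2\|w_{j,n}\|^2$. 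Since $\|v_{j,n-1}\|=1$ and, by the boundedness of $x_n$ and $\tilde v_{j,n}$ together with Assumption~A (which in particular yields a uniform bound on $\|J(x_n)\|$), one has $\|w_{j,n}\|\leq Q$, the quadratic term is $O(\tau^2)$ and it only remains to control $2\tau\gamma\, v_{j,n-1}^\top w_{j,n}$.

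Next I would expand the inner product and, using $v_{j,n-1}^\top v_{j,n-1}=1$ to combine the first and last summands, write
$$v_{j,n-1}^\top w_{j,n}=v_{j,n-1}^\top J(x_n)(\tilde v_{j,n}-v_{j,n-1})-2\sum_{l=1}^{j-1}\big(v_{j,n-1}^\top v_{l,n}\big)\big(v_{l,n}^\top J(x_n)\tilde v_{j,n}\big).$$
The first term is $O(\tau)$ by (\ref{simp}), hence contributes only $O(\tau^2)$ after multiplication by $2\tau\gamma$. For the sum, the key step is the splitting $v_{j,n-1}^\top v_{l,n}=(v_{l,n}-\tilde v_{l,n})^\top v_{j,n-1}+(\tilde v_{l,n}-v_{l,n-1})^\top v_{j,n-1}+v_{l,n-1}^\top v_{j,n-1}$: for $l<j$ the last term is $\delta_{l,j}=0$ by the orthonormality of the updated vectors, the middle term is $O(\tau)$ by (\ref{simp}), so $|v_{j,n-1}^\top v_{l,n}|\leq\|\tilde v_{l,n}-v_{l,n}\|+Q\tau$. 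Combining this with $|v_{l,n}^\top J(x_n)\tilde v_{j,n}|\leq Q$ bounds the sum by $Q\sum_{l=1}^{j-1}\|\tilde v_{l,n}-v_{l,n}\|+Q\tau$, the finitely many ($j-1\leq k-1$) $O(\tau)$ remainders being absorbed into a single $Q\tau$. Multiplying by $2\tau\gamma$ and collecting all contributions gives $\big|\|\tilde v_{j,n}\|^2-1\big|\leq Q\tau\sum_{l=1}^{j-1}\|\tilde v_{l,n}-v_{l,n}\|+Q\tau^2$, and the resulting $Q_2,Q_3$ are uniform in $j$ and $n$ because every invoked constant (from Assumption~A and the boundedness of $x_n$, $\tilde v_{i,n}$) is uniform and $k$ is fixed.

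The step I expect to be the main (subtle rather than deep) obstacle is the correct choice of splitting for $v_{j,n-1}^\top v_{l,n}$. The alternative $v_{j,n-1}^\top v_{l,n}=(v_{j,n-1}-v_{j,n})^\top v_{l,n}$, which parallels the treatment of $A_2$ in Lemma~\ref{lemm1} and exploits $v_{j,n}^\top v_{l,n}=0$, would reintroduce the index-$j$ quantity $\|\tilde v_{j,n}-v_{j,n}\|$ on the right-hand side; this is inadmissible here and would break the multi-variable circulating induction exploited later. Routing the splitting through $v_{l,n-1}$ instead keeps only indices $l<j$, which is exactly the form the lemma requires. Apart from this, the argument is routine tracking of $O(\tau^2)$ terms analogous to Lemma~\ref{lemm1}, and—unlike Lemma~\ref{lemm1}—it does not need the symmetry of $J(x_n)$.
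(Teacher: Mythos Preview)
Your proposal is correct and follows essentially the same approach as the paper: both expand $\|\tilde v_{j,n}\|^2$ from the defining relation, absorb the $\tau^2$ term, combine the two $v_{j,n-1}^\top J(x_n)(\cdot)$ contributions into $v_{j,n-1}^\top J(x_n)(\tilde v_{j,n}-v_{j,n-1})=O(\tau)$ via (\ref{simp}), and handle $v_{j,n-1}^\top v_{l,n}$ by the splitting $v_{l,n}-\tilde v_{l,n}+\tilde v_{l,n}-v_{l,n-1}$ together with $v_{l,n-1}^\top v_{j,n-1}=0$. Your additional commentary on why the alternative splitting through $v_{j,n}$ would be inadmissible is a nice clarification that the paper leaves implicit.
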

\begin{proof}
	From the definition of $\tilde v_{j,n}$ we have
	$$\begin{array}{l}
		\ds \|\tilde v_{j,n}\|^2=1+2\tau\gamma\bigg(v_{j,n-1}^\top-2\sum_{l=1}^{j-1}v_{j,n-1}^\top v_{l,n} v_{l,n}^\top\bigg)J(x_n)\tilde v_{j,n}\\[0.15in]
		\ds\qquad\qquad\qquad-2\tau\gamma v_{j,n-1}^\top J(x_n)v_{j,n-1}+O(\tau^2),
	\end{array}  $$
	which, together with (\ref{simp}), implies
	$$\begin{array}{l}
		\ds\big|\|\tilde v_{j,n}\|^2-1\big|= \bigg|2\tau\gamma v_{j,n-1}^\top J(x_n)(\tilde v_{j,n}-v_{j,n-1}) \\[0.15in]
		\ds\qquad -4\tau\gamma\sum_{l=1}^{j-1}v_{j,n-1}^\top (v_{l,n}-\tilde v_{l,n}+\tilde v_{l,n}-v_{l,n-1}) v_{l,n}^\top J(x_n)\tilde v_{j,n}+O(\tau^2)\bigg|\\[0.15in]
		\ds\qquad\qquad\qquad\leq Q_2\tau\sum_{l=1}^{j-1}\|\tilde v_{l,n}-v_{l,n}\|+Q_3\tau^2.
	\end{array} $$
	Thus we complete the proof.
\end{proof}

\section{Estimate of $v_{j,n}-\tilde v_{j,n}$}\label{sec3}
In Section \ref{sec22} we could observe that the estimates of several quantities depend on the norm of $v_{j,n}-\tilde v_{j,n}$. However, we will find in the following derivations that the estimate of $v_{j,n}-\tilde v_{j,n}$ in turn depends on these quantities. To resolve this issue, we carry out a multi-variable circulating induction procedure (cf. the proof of Theorem \ref{lem2k}) to bound $v_{j,n}-\tilde v_{j,n}$ in this section.

For $\bar G>Q_3Q_4+kQ_1$ where $Q_1$ and $Q_3$ are introduced in Lemmas \ref{lemm1}-\ref{lemm2} and $Q_4>1$ represents the bound of $\{\tilde v_{j,n}\}_{j=1,n=0}^{k,N}$, there exists an intermediate constant $G>0$ such that
$$\bar G>Q_3Q_4+kG\text{ and } G>Q_1. $$
In particular, as $Q_4>1$, we have $\bar G>Q_3$. Then for $\tau$ small enough the following inequalities hold
\begin{equation}\label{GG} \begin{array}{c}
		\ds\frac{Q_0\tau k\bar G+Q_1+kG^2\tau^2}{(1-Q_2\tau^3 k\bar G-Q_3\tau^2-kG^2\tau^4)^{1/2}}\leq G,\\[0.15in]
		\ds \frac{Q_4(Q_2\tau k \bar G+Q_3+kG^2\tau^2)+kG}{(1-Q_2\tau^3 k\bar G-Q_3\tau^2-kG^2\tau^4)^{1/2}}\leq \bar G.
\end{array}  \end{equation}
In subsequent proofs, we always choose sufficiently small step size $\tau$ such that the condition (\ref{GG}) is satisfied.

To start the induction, we need to estimate the properties of the numerical solutions at the first few steps.
\begin{lemma}\label{lemm3}
	Under the condition (\ref{GG}), the following estimates hold for $1\leq n\leq N$
	$$|\tilde v_{i,n}^\top v_{1,n}|\leq G\tau^2,~~1<i\leq k;~~\|\tilde v_{j,n}-v_{j,n}\|\leq \bar G\tau^2,~~1\leq j\leq 2.  $$
\end{lemma}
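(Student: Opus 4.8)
The plan is to read Lemma~\ref{lemm3} as the \emph{base case} of the multi-variable circulating induction. The apparent obstruction is that Lemmas~\ref{lemm1}--\ref{lemm2} bound $\tilde v_{i,n}^\top\tilde v_{m,n}$ and $\big|\|\tilde v_{j,n}\|^2-1\big|$ in terms of the orthonormalization errors $\|\tilde v_{l,n}-v_{l,n}\|$, yet it is precisely those errors we want to control; the point is that the dependence is broken at the bottom, because for $j=1$ the sum $\sum_{l=1}^{j-1}$ in Lemma~\ref{lemm2} is empty. Accordingly I would prove the three assertions \emph{in the order} (i)~$\|\tilde v_{1,n}-v_{1,n}\|\le\bar G\tau^2$, (ii)~$|\tilde v_{i,n}^\top v_{1,n}|\le G\tau^2$ for $1<i\le k$, (iii)~$\|\tilde v_{2,n}-v_{2,n}\|\le\bar G\tau^2$, since (ii) uses (i) and (iii) uses both (i) and the $i=2$ instance of (ii). Every estimate is done pointwise in $n$, so the uniformity in $n$ claimed in the statement is automatic: $Q_0,\dots,Q_4$ and the bound (\ref{simp}) are already uniform in $n$, and $\tau$ is fixed small enough that (\ref{GG}) holds and the denominators appearing below stay positive.

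For step (i): from $Y_{1,n}=\|\tilde v_{1,n}\|$ we have $v_{1,n}-\tilde v_{1,n}=\tilde v_{1,n}\big(\|\tilde v_{1,n}\|^{-1}-1\big)$, hence $\|v_{1,n}-\tilde v_{1,n}\|=\big|1-\|\tilde v_{1,n}\|\big|\le\big|1-\|\tilde v_{1,n}\|^2\big|\le Q_3\tau^2\le\bar G\tau^2$, using Lemma~\ref{lemm2} with $j=1$ (empty sum) and $\bar G>Q_3$; the same inequality gives $\|\tilde v_{1,n}\|\ge(1-Q_3\tau^2)^{1/2}$. For step (ii): write $\tilde v_{i,n}^\top v_{1,n}=\tilde v_{i,n}^\top\tilde v_{1,n}/\|\tilde v_{1,n}\|$, apply Lemma~\ref{lemm1} with $m=1$ and insert the bound from step (i) to get $|\tilde v_{i,n}^\top\tilde v_{1,n}|\le Q_0\tau\bar G\tau^2+Q_1\tau^2=\tau^2(Q_0\bar G\tau+Q_1)$; dividing by $\|\tilde v_{1,n}\|$ and comparing numerator and denominator term-by-term with the first inequality in (\ref{GG}) (each extra factor $k$ or extra nonnegative term only enlarges the right-hand fraction) yields $|\tilde v_{i,n}^\top v_{1,n}|\le G\tau^2$.

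For step (iii): expand $v_{2,n}-\tilde v_{2,n}=\big(Y_{2,n}^{-1}-1\big)\tilde v_{2,n}-Y_{2,n}^{-1}(\tilde v_{2,n}^\top v_{1,n})v_{1,n}$, so that, since $\|v_{1,n}\|=1$ and $|Y_{2,n}^{-1}-1|\le|1-Y_{2,n}^2|/Y_{2,n}$,
$$\|v_{2,n}-\tilde v_{2,n}\|\le\frac{|1-Y_{2,n}^2|\,\|\tilde v_{2,n}\|+|\tilde v_{2,n}^\top v_{1,n}|}{Y_{2,n}}.$$
From the formula $Y_{2,n}^2=\|\tilde v_{2,n}\|^2-(\tilde v_{2,n}^\top v_{1,n})^2$, Lemma~\ref{lemm2} with $j=2$ (whose sum contains only the already-controlled $\|\tilde v_{1,n}-v_{1,n}\|$) and step (ii), one gets $|1-Y_{2,n}^2|\le Q_2\bar G\tau^3+Q_3\tau^2+G^2\tau^4$ and hence $Y_{2,n}\ge(1-Q_2\bar G\tau^3-Q_3\tau^2-G^2\tau^4)^{1/2}$. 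Substituting these, together with $\|\tilde v_{2,n}\|\le Q_4$ and $|\tilde v_{2,n}^\top v_{1,n}|\le G\tau^2$, and once more comparing term-by-term with the second inequality of (\ref{GG}), gives $\|\tilde v_{2,n}-v_{2,n}\|\le\bar G\tau^2$.

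The calculations are elementary; the care lies in two places. First, the bookkeeping of powers of $\tau$: the products of a factor $\tau$ coming from the scheme with the $O(\tau^2)$ orthonormalization errors must be recognized as genuinely higher order, so that the leading $Q_1$- and $G$-contributions land at order $\tau^2$ and can be absorbed into $G\tau^2$ and $\bar G\tau^2$ exactly via (\ref{GG}). Second, the strict ordering of the three sub-claims and the verification that every division (by $\|\tilde v_{1,n}\|$, by $Y_{2,n}$) is legitimate for $\tau$ small — which is precisely what the engineered constants $G,\bar G$ and the inequalities (\ref{GG}) are built to guarantee. No new idea beyond Lemmas~\ref{lemm1}--\ref{lemm2} is needed here; the genuine novelty is concentrated in the inductive step, Theorem~\ref{lem2k}.
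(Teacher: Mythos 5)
Your proposal is correct and follows essentially the same route as the paper's own proof: it proves the three sub-claims in the same order (error for $v_{1,n}$, then the inner products via Lemma~\ref{lemm1} with $m=1$, then error for $v_{2,n}$ via the bound on $Y_{2,n}$), using the same observation that the empty sum in Lemma~\ref{lemm2} at $j=1$ breaks the circular dependence and grounds the induction. Your added explanation — that the constants in (\ref{GG}) dominate the $m^*=1,2$ cases term-by-term because each dropped $k$-dependent term only shrinks the numerator and enlarges the denominator — is a helpful clarification that the paper leaves implicit but changes nothing in substance.
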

\begin{proof}
	We apply Lemma \ref{lemm2} to obtain
	$$\|\tilde v_{1,n}-v_{1,n}\|=\bigg\|\frac{\tilde v_{1,n}}{\|\tilde v_{1,n}\|}(\|\tilde v_{1,n}\|-1)\bigg\|\leq \big|\|\tilde v_{1,n}\|^2-1\big|\leq Q_3\tau^2\leq \bar G\tau^2. $$
	Then for $1< i\leq k$, we apply Lemma \ref{lemm1} and (\ref{GG}) to obtain
	$$\begin{array}{l}
		\ds|\tilde v_{i,n}^\top v_{1,n}|=\frac{|\tilde v_{i,n}^\top \tilde v_{1,n}|}{(\|\tilde v_{1,n}\|^2)^{1/2}}\leq \frac{Q_0\tau\|\tilde v_{1,n}-v_{1,n}\|+Q_1\tau^2}{(1-Q_3\tau^2)^{1/2}}\\[0.15in]
		\qquad\qquad \ds\leq \frac{Q_0\bar G\tau^3+Q_1\tau^2}{(1-Q_3\tau^2)^{1/2}}= \frac{Q_0\bar G\tau+Q_1}{(1-Q_3\tau^2)^{1/2}}\tau^2\leq G\tau^2.
	\end{array}   $$
	We incorporate the above two estimates and Lemma \ref{lemm2} to bound $Y_{2,n}$ as
	$$\begin{array}{rl}
		\ds Y_{2,n}\hspace{-0.1in}&\ds=\big(\|\tilde v_{2,n}\|^2-(\tilde v_{2,n}^\top v_{1,n})^2 \big)^{1/2}\\[0.05in]
		&\ds\in \big(1\pm (Q_2\tau\|\tilde v_{1,n}-v_{1,n}\|+Q_3\tau^2+G^2\tau^4) \big)^{1/2}\\[0.05in]
		&\ds\in \big(1\pm (Q_2\bar G\tau^3+Q_3\tau^2+G^2\tau^4) \big)^{1/2},
	\end{array}  $$
	which further implies
	$$|Y_{2,n}-1|\leq |Y_{2,n}^2-1|\leq Q_2\bar G\tau^3+Q_3\tau^2+G^2\tau^4.  $$
	We invoke these estimates into the expression of $v_{2,n}-\tilde v_{2,n}$ to get
	$$\begin{array}{rl}
		\ds\|v_{2,n}-\tilde v_{2,n}\|\hspace{-0.1in}&\ds=\frac{1}{Y_{2,n}}\|(1-Y_{2,n})\tilde v_{2,n}-(\tilde v_{2,n}^\top v_{1,n})v_{1,n} \|\\[0.15in]
		&\ds\leq \frac{Q_4(Q_2\bar G\tau^3+Q_3\tau^2+G^2\tau^4)+G\tau^2}{\big(1- (Q_2\bar G\tau^3+Q_3\tau^2+G^2\tau^4) \big)^{1/2}}\\[0.15in]
		&\ds= \frac{Q_4(Q_2\bar G\tau+Q_3+G^2\tau^2)+G}{\big(1- (Q_2\bar G\tau^3+Q_3\tau^2+G^2\tau^4) \big)^{1/2}}\tau^2\leq \bar G\tau^2.
	\end{array}  $$
	Thus we complete the proof.
\end{proof}

\begin{theorem}\label{lem2k}
	Under the condition (\ref{GG}), the following estimate holds for $1\leq n\leq N$
	$$\|v_{i,n}-\tilde v_{i,n}\|\leq \bar G\tau^2,~~1\leq i\leq k.$$
\end{theorem}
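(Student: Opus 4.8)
The plan is to prove Theorem \ref{lem2k} by strong induction on the index $i$, running a coupled (``circulating'') induction in which, for each fixed $i$, one simultaneously controls the three quantities: $|\tilde v_{l,n}^\top v_{i,n}|$ for $l>i$, the defect $\big|\|\tilde v_{i,n}\|^2-1\big|$ (equivalently $|Y_{i,n}-1|$), and $\|v_{i,n}-\tilde v_{i,n}\|$, with the bounds $G\tau^2$ and $\bar G\tau^2$ that were calibrated in \eqref{GG}. Lemma \ref{lemm3} already establishes the base cases $i=1,2$ (and the auxiliary bound $|\tilde v_{i,n}^\top v_{1,n}|\le G\tau^2$). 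So I would fix $2\le i\le k$, assume as induction hypothesis that $\|v_{l,n}-\tilde v_{l,n}\|\le \bar G\tau^2$ for all $1\le l\le i-1$ and all $1\le n\le N$, and deduce the same bound for level $i$.

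The core of the inductive step is a three-move chain. First, feed the induction hypothesis $\sum_{l=1}^{m}\|\tilde v_{l,n}-v_{l,n}\|\le m\bar G\tau^2\le k\bar G\tau^2$ into Lemma \ref{lemm1} to get $|\tilde v_{i,n}^\top\tilde v_{m,n}|\le Q_0\tau\cdot k\bar G\tau^2+Q_1\tau^2 = (Q_0 k\bar G\tau+Q_1)\tau^2$ for every $m<i$; dividing by $Y_{m,n}=(\|\tilde v_{m,n}\|^2-\sum_{j<m}(\tilde v_{m,n}^\top v_{j,n})^2)^{1/2}$, which the first denominator estimate in \eqref{GG} is designed to keep close to $1$ (using Lemma \ref{lemm2} plus the hypothesis and the already-established $|\tilde v_{m,n}^\top v_{j,n}|\le G\tau^2$), yields the orthogonality bound $|\tilde v_{i,n}^\top v_{m,n}|\le G\tau^2$ for all $m<i$ — this is exactly the first inequality in \eqref{GG}. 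Second, apply Lemma \ref{lemm2} with the induction hypothesis to get $\big|\|\tilde v_{i,n}\|^2-1\big|\le Q_2\tau\cdot k\bar G\tau^2+Q_3\tau^2$, and combine with the $i-1$ orthogonality bounds $(\tilde v_{i,n}^\top v_{j,n})^2\le G^2\tau^4$ just obtained to control $Y_{i,n}^2 = \|\tilde v_{i,n}\|^2-\sum_{j=1}^{i-1}(\tilde v_{i,n}^\top v_{j,n})^2$, so that $|Y_{i,n}^2-1|\le Q_2 k\bar G\tau^3+Q_3\tau^2+kG^2\tau^4$ and hence $|Y_{i,n}-1|$ obeys the same bound (since $Y_{i,n}+1>1$). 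Third, write, exactly as in the proof of Lemma \ref{lemm3},
\begin{equation*}
\|v_{i,n}-\tilde v_{i,n}\| = \frac{1}{Y_{i,n}}\bigg\|(1-Y_{i,n})\tilde v_{i,n}-\sum_{j=1}^{i-1}(\tilde v_{i,n}^\top v_{j,n})v_{j,n}\bigg\| \le \frac{Q_4|Y_{i,n}-1| + (i-1)G\tau^2}{Y_{i,n}},
\end{equation*}
bound $Q_4|Y_{i,n}-1|\le Q_4(Q_2 k\bar G\tau^3+Q_3\tau^2+kG^2\tau^4)$, bound $(i-1)G\tau^2\le kG\tau^2$, and bound $Y_{i,n}$ below by $(1-(Q_2 k\bar G\tau^3+Q_3\tau^2+kG^2\tau^4))^{1/2}$; the result is $\le \bar G\tau^2$ precisely by the second inequality in \eqref{GG}. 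This closes the induction at level $i$, and after $k$ steps gives the claim for all $1\le i\le k$.

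The subtlety I expect to be the main obstacle — and the reason the induction must be ``circulating'' rather than a plain induction on $i$ — is the mutual dependence flagged at the start of Section \ref{sec3}: the bound on $\|v_{i,n}-\tilde v_{i,n}\|$ needs the orthogonality bound $|\tilde v_{i,n}^\top v_{m,n}|\le G\tau^2$, which via Lemma \ref{lemm1} needs $\|\tilde v_{l,n}-v_{l,n}\|$ for $l\le m<i$, while Lemma \ref{lemm1} and Lemma \ref{lemm2} also reference $v_{l,n}$ produced by the Gram–Schmidt step, so at level $i$ one is really juggling $\{\|v_{l,n}-\tilde v_{l,n}\|\}_{l<i}$, $\{|\tilde v_{l,n}^\top v_{j,n}|\}_{j<l\le i}$, and $\{Y_{l,n}\}_{l\le i}$ all at once. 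The care needed is (i) to order the sub-claims within one value of $i$ correctly — orthogonality bound first, then $Y_{i,n}$, then the difference bound — and (ii) to verify that the constants $Q_0,\dots,Q_4$ and the choice of $G$ and $\bar G$ in \eqref{GG} make every chained inequality close with the \emph{same} $\bar G\tau^2$ uniformly in $i\le k$ and $n\le N$; the factors of $k$ appearing in \eqref{GG} are exactly what absorbs summing $i-1$ (or $m$, or $j-1$) terms each of size $O(\tau^2)$. One should also note that all the $O(\tau^2)$ remainders hidden in \eqref{AA} and in Lemma \ref{lemm2} are uniform, thanks to the a priori boundedness of $\|x_n\|$, $\|\tilde v_{i,n}\|$ established after \eqref{FDsadk} and Assumption A; this is what legitimizes treating $Q_1,Q_3$ (and $Q_4$) as $n$- and $i$-independent. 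Modulo that bookkeeping, the inductive step is a direct transcription of the $i=2$ computation in Lemma \ref{lemm3} with $2$ replaced by $i$ and single terms replaced by sums up to $i-1$.
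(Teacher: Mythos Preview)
Your proposal is correct and follows essentially the same multi-variable circulating induction as the paper; the only cosmetic difference is that the paper inducts on the orthonormalized index $m^*$ and carries \emph{both} the orthogonality bound $(\mathbb A):\max_{m^*<i\le k}|\tilde v_{i,n}^\top v_{m^*,n}|\le G\tau^2$ and the difference bound $(\mathbb B)$ in the joint hypothesis, whereas you induct on $i$ carrying only $(\mathbb B)$. The one point to tighten in your write-up is that passing from $|\tilde v_{i,n}^\top\tilde v_{m,n}|$ to $|\tilde v_{i,n}^\top v_{m,n}|$ is not a bare division by $Y_{m,n}$ but uses the full Gram--Schmidt identity $\tilde v_{i,n}^\top v_{m,n}=Y_{m,n}^{-1}\big(\tilde v_{i,n}^\top\tilde v_{m,n}-\sum_{j<m}(\tilde v_{m,n}^\top v_{j,n})(\tilde v_{i,n}^\top v_{j,n})\big)$, so you must run $m=1,\dots,i-1$ in order within each outer step and treat the previously obtained orthogonality bounds as part of the hypothesis---but you clearly anticipate this, since that is precisely what the $kG^2\tau^2$ term in the numerator of the first inequality in \eqref{GG} is there to absorb.
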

\begin{proof}
	We prove this theorem by induction for the following two relations
	\begin{equation*}
		\begin{array}{l}
			\ds (\mathbb A):~ \max_{m<i\leq k}\|\tilde v_{i,n}^\top v_{m,n}\|\leq G\tau^2 \text{ for some }1\leq m\leq k-1;\\[0.05in]
			\ds(\mathbb B):~\|v_{j,n}-\tilde v_{j,n}\|\leq \bar G\tau^2\text{ for some }1\leq j\leq k.
		\end{array}
	\end{equation*}
	Lemma \ref{lemm3} implies that $(\mathbb A)$ holds for $m=1$ and $(\mathbb B)$ holds for $1\leq j\leq 2$. Suppose
	\begin{equation}\label{induc}
		(\mathbb A) \text{ holds for }1\leq m\leq m^*-1\text{ and }(\mathbb B)\text{ holds for }1\leq j\leq m^*
	\end{equation}
	for some $1\leq m^*<k-1$. Then we remain to show that
	$$
	(\mathbb A) \text{ holds for } m= m^*\text{ and }(\mathbb B)\text{ holds for } j= m^*+1
	$$
	for the sake of mathematical induction.
	
	We use Lemma \ref{lemm2} and $(\mathbb A)$ with $1\leq m\leq m^*-1$ ( by induction hypotheses (\ref{induc})) to bound $Y_{m^*,n}$ by
	\begin{equation}\label{bnd0Y}
		\begin{array}{rl}
			\ds Y_{m^*,n}\hspace{-0.1in}&\ds=\bigg(\|\tilde v_{m^*,n}\|^2-\sum_{j=1}^{m^*-1}(\tilde v_{m^*,n}^\top v_{j,n})^2\bigg)^{1/2}\\
			& \ds\in \bigg[1\pm \bigg(Q_2\tau\sum_{l=1}^{m^*-1}\|\tilde v_{l,n}-v_{l,n}\|+Q_3\tau^2+(m^*-1) G^2\tau^4\bigg)\bigg]^{1/2} \\[0.15in]
			& \ds\in \big[1\pm \big(Q_2 (m^*-1)\bar G\tau^3+Q_3\tau^2+(m^*-1) G^2\tau^4\big)\big]^{1/2} ,
		\end{array}
	\end{equation}
	where in the above equation $Y_{m^*,n}\in [1\pm (\cdots)]^{1/2}$ means $[1- (\cdots)]^{1/2}\leq Y_{m^*,n}\leq [1+ (\cdots)]^{1/2}$.
	We then invoke the induction hypotheses (\ref{induc}), (\ref{bnd0Y}), the condition (\ref{GG}) and Lemmas \ref{lemm1}-\ref{lemm2} into the expression of $\tilde v_{i,n}^\top v_{m^*,n}$ to obtain for $m^*<i\leq k$
	$$\begin{array}{rl}
		\ds | \tilde v_{i,n}^\top v_{m^*,n}|\hspace{-0.1in}&\ds=\frac{1}{Y_{m^*,n}}\bigg|\tilde v_{i,n}^\top\tilde v_{m^*,n}-\sum_{j=1}^{m^*-1}(\tilde v_{m^*,n}^\top v_{j,n})(\tilde v_{i,n}^\top v_{j,n})\bigg| \\[0.15in]
		&\ds\leq\frac{1}{Y_{m^*,n}}\bigg( Q_0\tau\sum_{l=1}^{m^*}\|\tilde v_{l,n}-v_{l,n}\|+Q_1\tau^2+(m^*-1)G^2\tau^4\bigg) \\[0.2in]
		&\ds \leq \frac{Q_0\tau m^*\bar G+Q_1+(m^*-1)G^2\tau^2}{(1-Q_2\tau^3 (m^*-1)\bar G-Q_3\tau^2-(m^*-1)G^2\tau^4)^{1/2}}\tau^2\leq G\tau^2,
	\end{array}$$
	which implies that $(\mathbb A)$ holds for $m=m^*$. We then use Lemma \ref{lemm2} and $(\mathbb A)$ with $1\leq m\leq m^*$ to bound $Y_{m^*+1,n}$ by
	\begin{equation}\label{bndY}
		\begin{array}{rl}
			\ds Y_{m^*+1,n}\hspace{-0.1in}&\ds=\bigg(\|\tilde v_{m^*+1,n}\|^2-\sum_{j=1}^{m^*}(\tilde v_{m^*+1,n}^\top v_{j,n})^2\bigg)^{1/2}\\
			& \ds\in \bigg[1\pm \bigg(Q_2\tau\sum_{l=1}^{m^*}\|\tilde v_{l,n}-v_{l,n}\|+Q_3\tau^2+m^* G^2\tau^4\bigg)\bigg]^{1/2} \\[0.15in]
			& \ds\in \big[1\pm \big(Q_2 m^*\bar G\tau^3+Q_3\tau^2+m^* G^2\tau^4\big)\big]^{1/2} ,
		\end{array}
	\end{equation}
	which implies
	$$|1-Y_{m^*+1,n}|\leq |1-Y_{m^*+1,n}^2|\leq Q_2 m^*\bar G\tau^3+Q_3\tau^2+m^* G^2\tau^4. $$
	We invoke this and $(\mathbb A)$ with $1\leq m\leq m^*$ in $v_{m^*+1,n}-\tilde v_{m^*+1,n}$ to get
	\begin{equation}\label{diff}
		\begin{array}{l}
			\ds \|v_{m^*+1,n}-\tilde v_{m^*+1,n}\|\\[0.05in]
			\ds\qquad=\frac{1}{Y_{m^*+1,n}}\bigg\|(1-Y_{m^*+1,n})\tilde v_{m^*+1,n}-\sum_{j=1}^{m^*}(\tilde v_{m^*+1,n}^\top v_{j,n})v_{j,n}\bigg\|\\[0.2in]
			\ds\qquad\leq \frac{Q_4(Q_2\tau m^* \bar G+Q_3+m^*G^2\tau^2)+m^*G}{(1-Q_2\tau^3 m^*\bar G-Q_3\tau^2-m^*G^2\tau^4)^{1/2}}\tau^2\leq \bar G\tau^2,
		\end{array}
	\end{equation}
	which implies that $(\mathbb B)$ holds for $j=m^*+1$ and thus completes the proof. \end{proof}

\section{Error estimates}\label{sec4}
We prove error estimates for the semi-implicit scheme (\ref{FDsadk}). Define the errors
$$e^x_n:=x(t_n)-x_n,~~e^{v_i}_n:=v_i(t_n)-v_{i,n},~~1\leq n\leq N,~~1\leq i\leq k.$$
To bound $e^x_n$, we derive the reference equation from the first equation of (\ref{sadk}) via the forward Euler discretization
$$x(t_{n}) =x(t_{n-1})+\tau\beta\bigg(I -2\sum_{j=1}^k v_j(t_{n-1})v_j(t_{n-1})^\top \bigg)F(x(t_{n-1}))+O(\tau^2). $$
Then we subtract the scheme of $x_n$ in (\ref{FDsadk}) from this equation and apply almost the same derivation as \cite[Equation 4.14]{Z3} to obtain
\begin{equation}\label{xref}
	\|e^x_n\|\leq Q\tau\sum_{m=1}^{n-1}\sum_{j=1}^k \|e^{v_j}_m\|+Q\tau,~~1\leq n\leq N.
\end{equation}
We observe that $\|e^x_n\|$ is bounded in terms of $\|e^{v_j}_n\|$, which is estimated in the following theorem.

\begin{theorem}\label{thmevk}
	Under the Assumption A, the following estimate holds for the semi-implicit scheme (\ref{FDsadk}) for $\tau$ sufficiently small
	$$\|e^x_{n}\|+\sum_{i=1}^k \|e^{v_i}_n\|\leq Q\tau,~~1\leq n\leq N. $$
	Here $Q$ is independent from $\tau$, $n$ and $N$.
\end{theorem}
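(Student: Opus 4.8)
The plan is to close a Gronwall-type inequality that couples $\|e^x_n\|$ with $\sum_{i=1}^k\|e^{v_i}_n\|$, bootstrapping on the index $i$ of the directional vectors. Since \eqref{xref} already bounds $\|e^x_n\|$ in terms of $\tau\sum_{m<n}\sum_j\|e^{v_j}_m\|$, it suffices to prove a companion estimate $\sum_{i=1}^k\|e^{v_i}_n\|\leq Q\tau\sum_{m=1}^{n-1}\big(\|e^x_m\|+\sum_{j=1}^k\|e^{v_j}_m\|\big)+Q\tau$ and then feed both into the discrete Gronwall lemma. The main work is deriving the bound on $e^{v_i}_n=v_i(t_n)-v_{i,n}$.

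First I would write down the reference equation for $v_i(t_n)$ by applying forward Euler to the second equation of \eqref{sadk}, getting $v_i(t_n)=v_i(t_{n-1})+\tau\gamma\big(I-v_i(t_{n-1})v_i(t_{n-1})^\top-2\sum_{j<i}v_j(t_{n-1})v_j(t_{n-1})^\top\big)J(x(t_{n-1}))v_i(t_{n-1})+O(\tau^2)$. Then I would compare this against the scheme for $\tilde v_{i,n}$ in \eqref{FDsadk}, which is the implicit relation $\tilde v_{i,n}=v_{i,n-1}+\tau\gamma\big(I-2\sum_{j<i}v_{j,n}v_{j,n}^\top\big)J(x_n)\tilde v_{i,n}-\tau\gamma v_{i,n-1}v_{i,n-1}^\top J(x_n)v_{i,n-1}$. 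The key manipulations are: (i) use \eqref{simp} to replace $\tilde v_{i,n}$ by $v_{i,n-1}$ inside the $O(\tau)$ terms at the cost of $O(\tau^2)$; (ii) use Theorem \ref{lem2k} to replace $v_{j,n}$ by $\tilde v_{j,n}$ and then by $v_{j,n-1}$ (again via \eqref{simp}) inside the sum $\sum_{j<i}v_{j,n}v_{j,n}^\top$, at cost $O(\tau^2)$ per term; (iii) invoke Assumption A to control $\|J(x_n)-J(x(t_{n-1}))\|\leq \|J(x_n)-J(x(t_n))\|+\|J(x(t_n))-J(x(t_{n-1}))\|\leq L\|e^x_n\|+Q\tau$, and use the already-established bound $\|e^x_n\|\leq Q\tau\sum_{m<n}\sum_j\|e^{v_j}_m\|+Q\tau$. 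After these substitutions the difference $\tilde v_{i,n}-v_i(t_n)$ is expressed, up to $O(\tau^2)$ and up to $Q\tau(\|e^x_n\|+\sum_{j\le i-1}\|e^{v_j}_{n-1}\|)$-type terms, as $e^{v_i}_{n-1}$ plus $\tau\gamma$ times a Lipschitz-type perturbation, so $\|\tilde v_{i,n}-v_i(t_n)\|\leq (1+Q\tau)\|e^{v_i}_{n-1}\|+Q\tau\sum_{j<i}\|e^{v_j}_{n-1}\|+Q\tau\|e^x_{n-1}\|+Q\tau^2$, where I have folded the $\|e^x_n\|$ contribution back into earlier errors via \eqref{xref}.

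Next I would pass from $\tilde v_{i,n}$ to $v_{i,n}$ through the Gram–Schmidt step. Here Theorem \ref{lem2k} gives $\|v_{i,n}-\tilde v_{i,n}\|\leq\bar G\tau^2$, so $\|e^{v_i}_n\|\leq\|v_i(t_n)-\tilde v_{i,n}\|+\|\tilde v_{i,n}-v_{i,n}\|\leq\|v_i(t_n)-\tilde v_{i,n}\|+\bar G\tau^2$, which is already enough — the orthonormalization correction is genuinely $O(\tau^2)$ and does not degrade the estimate. Combining with the previous paragraph yields, for each $i$, $\|e^{v_i}_n\|\leq(1+Q\tau)\|e^{v_i}_{n-1}\|+Q\tau\sum_{j<i}\|e^{v_j}_{n-1}\|+Q\tau\|e^x_{n-1}\|+Q\tau^2$. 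Summing over $1\leq i\leq k$ and writing $\mathcal E_n:=\|e^x_n\|+\sum_{i=1}^k\|e^{v_i}_n\|$, I get $\sum_i\|e^{v_i}_n\|\leq(1+Q\tau)\mathcal E_{n-1}+Q\tau^2$; adding \eqref{xref} (after bounding its double sum by $\tau\sum_{m<n}\mathcal E_m$) gives $\mathcal E_n\leq Q\tau\sum_{m=1}^{n-1}\mathcal E_m+Q\tau$. The discrete Gronwall inequality then yields $\mathcal E_n\leq Q\tau e^{QT}$, i.e. $\mathcal E_n\leq Q\tau$ for $1\leq n\leq N$, which is the claim; note $\mathcal E_0=0$ by the initial conditions \eqref{inicondk}.

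The main obstacle I anticipate is the bookkeeping in step (ii) of the second paragraph: because the scheme \eqref{FDsadk} couples the computation of $\tilde v_{i,n}$ with the \emph{already-orthonormalized} vectors $v_{j,n}$ at the current step (rather than $v_{j,n-1}$ as in the explicit scheme), one cannot directly use orthonormality of $\{v_{j,n-1}\}$ and must carefully track how the $O(\tau^2)$ replacement errors from Theorem \ref{lem2k} and \eqref{simp} propagate through the nested sums over $j<i$, ensuring the constants stay independent of $i$, $n$ and $N$. A secondary subtlety is handling the implicitness in $\tilde v_{i,n}$: one should first verify that $(I-\tau\gamma(\cdots)J(x_n))$ is invertible with $O(1)$ inverse for $\tau$ small, so that $\tilde v_{i,n}$ is well-defined and the substitution $\tilde v_{i,n}\mapsto v_{i,n-1}+O(\tau)$ is legitimate; this is exactly what \eqref{simp} encodes, so it should go through smoothly.
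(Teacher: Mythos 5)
Your overall strategy matches the paper's: derive a per-$i$ recursive inequality for $\|e^{v_i}_n\|$ using Theorem~\ref{lem2k}, (\ref{simp}), (\ref{xref}) and Assumption~A, sum over $i$, and close with a discrete Gronwall argument. The only structural deviation is that you write the reference equation by forward Euler at $t_{n-1}$ and then push all time-$n$ quantities in the scheme back to $t_{n-1}$ via Theorem~\ref{lem2k} and (\ref{simp}), whereas the paper uses a backward Euler reference at $t_n$ and substitutes $\tilde v_{i,n}=v_{i,n}-(v_{i,n}-\tilde v_{i,n})$ into the scheme so that the comparison is directly in terms of $e^{v_j}_n$. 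Both are legitimate and lead to recursions of equivalent strength; your variant avoids having $\|e^{v_j}_n\|$ on the right-hand side and hence needs no absorption, at the price of a few more replacement steps.

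There is, however, a genuine gap in your final Gronwall step. You arrive at $\sum_i\|e^{v_i}_n\|\leq(1+Q\tau)\mathcal E_{n-1}+Q\tau^2$ and add the post-Gronwall bound (\ref{xref}) for $\|e^x_n\|$, and you claim the sum yields $\mathcal E_n\leq Q\tau\sum_{m<n}\mathcal E_m+Q\tau$. It does not: what you actually get is $\mathcal E_n\leq(1+Q\tau)\mathcal E_{n-1}+Q\tau\sum_{m<n}\mathcal E_m+Q\tau$, and the leading $\mathcal E_{n-1}$ (with coefficient essentially $1$, not $O(\tau)$) cannot be folded into $Q\tau\sum_{m<n}\mathcal E_m$ without already knowing $\mathcal E_{n-1}=O(\tau)$ — exactly what you are trying to prove. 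Applying Gronwall to the inequality you actually have would produce a coefficient of order $T$, not $\tau$, and the resulting bound blows up as $\tau\to 0$. The paper sidesteps this by working with $R_n:=\sum_i\|e^{v_i}_n\|$ alone, plugging (\ref{xref}) into the $R_n$ recursion (which makes the $\|e^x_n\|$ contribution enter with an extra factor of $\tau$, hence at order $\tau^2$), then \emph{summing the recursion over $n$ from $1$ to $n_*$} so that the $R_{n-1}$ terms telescope against $\sum R_n$; only after this telescoping does one obtain $R_{n_*}\leq Q\tau\sum_{n<n_*}R_n+Q\tau$, to which Gronwall applies, and finally $\|e^x_n\|\leq Q\tau$ follows a posteriori from (\ref{xref}). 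An alternative fix within your framework is to replace (\ref{xref}) by the pre-Gronwall step inequality $\|e^x_n\|\leq(1+Q\tau)\|e^x_{n-1}\|+Q\tau\sum_j\|e^{v_j}_{n-1}\|+Q\tau^2$; then adding to the $R_n$ recursion gives the clean relation $\mathcal E_n\leq(1+Q\tau)\mathcal E_{n-1}+Q\tau^2$, and the standard unrolling yields $\mathcal E_n\leq Q\tau$. Either fix is straightforward, but the step as you have written it is incorrect.

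A minor further imprecision: in item (ii) of your second paragraph, replacing $\tilde v_{j,n}$ by $v_{j,n-1}$ via (\ref{simp}) costs $O(\tau)$, not $O(\tau^2)$; it only becomes $O(\tau^2)$ after multiplying by the $\tau\gamma$ prefactor in front of the sum. This does not affect the conclusion but the bookkeeping should make the prefactor explicit, since keeping track of where the factors of $\tau$ arise is precisely what matters in the Gronwall step above.
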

\begin{proof}
	we derive the reference equation from the second equation of (\ref{sadk}) via the backward Euler discretization for $1\leq i\leq k$
	\begin{equation}\label{Refk}
		\begin{array}{l}
			\ds v_i(t_{n})=v_i(t_{n-1})+\tau\gamma\bigg( I-v_i(t_{n})v_i(t_{n})^\top\\
			\ds\hspace{0.5in}-2\sum_{j=1}^{i-1}v_j(t_{n})v_j(t_{n})^\top\bigg)J(x(t_{n}))v_i(t_{n})+O(\tau^2)\\
			\ds\qquad~\,=v_i(t_{n-1})+\tau\gamma\bigg( I-2\sum_{j=1}^{i-1}v_j(t_{n})v_j(t_{n})^\top\bigg)J(x(t_{n}))v_i(t_{n})\\[0.2in]
			\ds\hspace{0.5in}-\tau\gamma v_i(t_{n-1})v_i(t_{n-1})^\top J(x(t_{n}))v_i(t_{n-1})+C_n
		\end{array}
	\end{equation}
	where
	$$\begin{array}{l}
		\ds C_n=\tau\gamma\big(v_i(t_{n-1})v_i(t_{n-1})^\top J(x(t_{n}))v_i(t_{n-1})\\[0.05in]
		\ds\qquad\qquad -v_i(t_{n})v_i(t_{n})^\top J(x(t_{n}))v_i(t_{n})\big)+O(\tau^2).
	\end{array}
	$$
	We then substitute $\tilde v_{i,n}$ by $v_{i,n}-(v_{i,n}-\tilde v_{i,n})$ in the scheme of $\tilde v_{i,n}$ in (\ref{FDsadk}) to obtain
	$$ v_{i,n}=v_{i,n-1}+\tau\gamma\bigg( I-2\sum_{j=1}^{i-1}v_{j,n}v_{j,n}^\top\bigg)J(x_{n}) v_{i,n}-\tau\gamma v_{i,n-1}v_{i,n-1}^\top J(x_{n})v_{i,n-1}+D_n
	$$
	where
	$$
	D_n=-\tau\gamma\bigg( I-2\sum_{j=1}^{i-1}v_{j,n}v_{j,n}^\top\bigg)J(x_{n})(v_{i,n}-\tilde v_{i,n})+(v_{i,n}-\tilde v_{i,n}). $$
	We subtract this equation from (\ref{Refk}) to obtain
	\begin{equation}\label{exx}
		\begin{array}{rl}
			\ds e^{v_i}_n\hspace{-0.1in}&\ds=e^{v_i}_{n-1}+\tau\gamma\big(J(x(t_{n}))v_i(t_{n})-J(x_{n})v_{i,n}\big)\\[0.05in]
			&\ds-\tau\gamma \big( v_i(t_{n-1})v_i(t_{n-1})^\top J(x(t_{n}))v_i(t_{n-1})-v_{i,n-1}v_{i,n-1}^\top J(x_{n})v_{i,n-1}\big)\\[0.05in]
			&\ds-2\tau\gamma \sum_{j=1}^{i-1}\big[ v_j(t_{n})v_j(t_{n})^\top J(x(t_{n}))v_i(t_{n})-v_{j,n}v_{j,n}^\top J(x_{n})v_{i,n}\big]+C_n-D_n.
		\end{array}
	\end{equation}
	We introduce intermediate terms to bound the third right-hand side term of (\ref{exx}) as
	$$\begin{array}{l}
		\ds \|v_i(t_{n-1})v_i(t_{n-1})^\top J(x(t_{n}))v_i(t_{n-1})-v_{i,n-1}v_{i,n-1}^\top J(x_{n})v_{i,n-1}\|\\[0.05in]
		\ds\quad=\|e^{v_i}_{n-1}v_i(t_{n-1})^\top J(x(t_{n}))v_i(t_{n-1})+v_{i,n-1}(e^{v_i}_{n-1})^\top J(x(t_{n}))v_i(t_{n-1})\\[0.05in]
		\ds\quad\quad+v_{i,n-1}v_{i,n-1}^\top (J(x(t_n))-J(x_{n}))v_i(t_{n-1})+v_{i,n-1}v_{i,n-1}^\top J(x_{n})e^{v_i}_{n-1}\|\\[0.05in]
		\ds\quad\leq Q\|e^{v_i}_{n-1}\|+Q\|e^x_n\|.
	\end{array} $$
	We apply Theorem \ref{lem2k} to bound $D_n$ as
	$$\|D_n\|\leq Q\tau^2. $$
	The other right-hand side terms of (\ref{exx}) could be bounded similarly, leading to
	\begin{equation*}
		\begin{array}{rl}
			\ds\| e^{v_i}_n\|\hspace{-0.1in}&\ds\leq \|e^{v_i}_{n-1}\|+Q\tau\big(\|e^x_{n}\|+\|e^{v_i}_n\|+\|e^{v_i}_{n-1}\|\big)+Q\tau\sum_{j=1}^{i-1}\|e^{v_j}_n\| +Q\tau^2\\
			\ds&\ds\leq \|e^{v_i}_{n-1}\|+Q\tau\big(\|e^{v_i}_n\|+\|e^{v_i}_{n-1}\|\big)+Q\tau^2\sum_{m=1}^{n-1}\sum_{j=1}^k\|e^{v_j}_m\|\\
			&\ds\qquad+Q\tau\sum_{j=1}^{i-1}\|e^{v_j}_n\| +Q\tau^2.
		\end{array}
	\end{equation*}
	Adding this equation from $i=1$ to $k$ and denoting
	$R_{n}:=\|e^{v_1}_n\|+\cdots+\|e^{v_k}_n\| $
	yield
	\begin{equation}\label{mh8k}
		\ds R_{n}\leq R_{n-1}+Q\tau ( R_{n-1}+R_n)+Q\tau^2\sum_{m=1}^{n-1} R_{m} +Q\tau^2.
	\end{equation}
	Adding this equation from $n=1$ to $n_*$ and using
	$$\tau^2\sum_{n=1}^{n_*}\sum_{m=1}^{n-1}R_{m}=\tau^2\sum_{m=1}^{n_*-1}\sum_{n=m+1}^{n_*}R_{m}\leq T \tau\sum_{m=1}^{n_*-1}R_{m}$$
	and $R_0=0$ we get
	\begin{equation}\label{mh9k}
		\ds R_{n_*}\leq Q\tau\sum_{n=1}^{n_*}R_{n} +Q\tau,
	\end{equation}
	which implies
	$$
	\ds R_{n_*}\leq\frac{1}{1-Q\tau}\bigg( Q\tau\sum_{n=1}^{n_*-1}R_{n} +Q\tau\bigg)\leq Q\tau\sum_{n=1}^{n_*-1}R_{n} +Q\tau.
	$$
	Then an application of the discrete Gronwall inequality leads to
	$$\ds R_{n}\leq Q\tau,~~1\leq n\leq N. $$
	Plugging this estimate back to (\ref{xref}) yields
	\begin{equation*}
		\|e^x_n\|\leq Q\tau\sum_{m=1}^{n-1}R_m+Q\tau\leq Q\tau,
	\end{equation*}
	which completes the proof.
\end{proof}

\section{Convergence of GMRES solver}
For implementation, we apply the GMRES method to solve the semi-implicit system (\ref{FDsadk}). To prove the convergence rate of the GMRES solver, we reformulate the semi-implicit schemes in (\ref{FDsadk}) into the following form
\begin{equation}
	\left\{
	\begin{array}{ll}
		G x_n= a,&\\[0.05in]
		H_i\tilde{v}_{i,n} = b_i,& 2\leq i\leq k,
	\end{array}
	\right.
	\label{linear_sys}
\end{equation}
where
\begin{equation}
	\begin{aligned}
		&G=I - \tau\beta\bigg(I-2\sum_{j=1}^kv_{j,n-1}v_{j,n-1}^\top\bigg)\mathcal L,\\
		&a = x_{n-1} +\tau\beta \bigg(I-2\sum_{j=1}^kv_{j,n-1}v_{j,n-1}^\top\bigg)\mathcal N(x_{n-1}),\\
		&H_i = I - \tau\gamma\bigg(I-2\sum_{j=1}^{i-1}v_{j,n}v_{j,n}^{\top}\bigg)J(x_n),\\ &b_i=v_{i,n-1} - \tau\gamma(v^\top_{i,n-1}J(x_n)v_{i,n-1})v_{i,n-1}.
	\end{aligned}
\end{equation}	
To prove the desired results, we refer the following convergence estimate for the GMRES method \cite{trefethen1997numerical}.
\begin{theorem}
	At the step $m$ of the GMRES iteration on solving the linear system $Ax=b$, where $b\in\mathbb{R}^N$, $A\in\mathbb{R}^{N\times N}$ and $x^{(m)}$ refers to the numerical solution at this step, the following estimate holds for the residual $r_m=b-Ax^{(m)}$
	\begin{equation}
		\frac{\|r_m\|}{\|b\|} \leq \inf_{p_m\in P_m}\|p_m(A)\|,
	\end{equation}
	where $P_m=\{p(z)=\sum_{i=0}^m a_i z^i|p(0)=1, a_i\in\mathbb{R}, 0\leq i\leq m
	\}$.
	\label{gmres}
\end{theorem}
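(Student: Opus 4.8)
The plan is to reduce this classical bound (see \cite{trefethen1997numerical}) to the variational characterization of the GMRES iterates combined with elementary polynomial calculus. First I would recall the definition of the method: starting from the initial guess $x^{(0)}=0$ (the choice implicit in normalizing by $\|b\|$, since then $r_0=b$), the $m$-th iterate $x^{(m)}$ is the minimizer of $\|b-Ax\|$ over the affine Krylov subspace $x^{(0)}+\mathcal K_m(A,b)$, where $\mathcal K_m(A,b)=\mathrm{span}\{b,Ab,\dots,A^{m-1}b\}$. Hence every admissible $x$ has the form $x=q(A)b$ for some polynomial $q$ of degree at most $m-1$, and conversely every such $q$ furnishes an admissible candidate.

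Next I would transfer this description to the residuals. For $x=q(A)b$ one has $b-Ax=(I-Aq(A))b=p(A)b$ with $p(z):=1-zq(z)$, so that $\deg p\le m$ and $p(0)=1$, i.e.\ $p\in P_m$; conversely, any $p\in P_m$ may be written $p(z)=1-zq(z)$ with $\deg q\le m-1$. Thus the set of residuals reachable from the Krylov subspace is precisely $\{p(A)b:p\in P_m\}$, and the minimality of $x^{(m)}$ yields $\|r_m\|=\min_{p\in P_m}\|p(A)b\|\le\|p(A)b\|$ for every $p\in P_m$.

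Finally I would invoke submultiplicativity of $\|\cdot\|$, namely $\|p(A)b\|\le\|p(A)\|\,\|b\|$, divide through by $\|b\|$ (which is nonzero, since otherwise the linear system is trivial and there is nothing to prove), and take the infimum over $p\in P_m$ to conclude $\|r_m\|/\|b\|\le\inf_{p\in P_m}\|p(A)\|$.

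Since the statement is standard, I do not expect a genuine obstacle. The only points requiring a little care are the bookkeeping of the correspondence between degree-$(m-1)$ combinations spanning $\mathcal K_m$ and the normalized polynomials in $P_m$, and the happy-breakdown case in which $\mathcal K_m$ stabilizes at dimension $m'<m$: there the exact solution already lies in $\mathcal K_{m'}$, so $r_{m'}=0$, and since $P_{m'}\subseteq P_m$ the asserted inequality still holds a fortiori.
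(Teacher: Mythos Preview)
Your argument is correct and is essentially the standard derivation of this bound. Note, however, that the paper does not supply its own proof of this theorem: it is quoted as a known result from \cite{trefethen1997numerical} and then applied to the semi-implicit systems in the subsequent theorem. So there is nothing in the paper to compare your proof against beyond the citation; your write-up simply fills in the classical justification that the authors chose to omit.
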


Based on the convergence result, we derive the convergence rate of the GMRES for solving (\ref{linear_sys}) in the following theorem.
\begin{theorem}
	If the time step size $\tau$ satisfies
	\[ \tau \leq \frac{q}{\max\{\gamma\max_{\|x\|\leq Q_x }\|J(x)\|, \beta \|\mathcal L\|\}},\quad q\in(0,1),\]
	where $Q_x$ is the bound of $\|x_n\|$ for $1\leq n\leq N$ introduced above (\ref{simp}), then the GMRES method for solving (\ref{linear_sys}) converges for $m\geq 0$
	\[\frac{\|r_{i,m}\|}{\|b_i\|} \leq q^m\mbox{ for }2\leq i\leq k,\quad \frac{\|r_m\|}{\|a\|}\leq q^m,\]
	where $\tilde{v}_{i,n}^{(m)}$ and $x_n^{(m)}$ refer to the GMRES numerical solutions of $\tilde{v}_{i,n}$ and $x_n$, respectively, at the step $m$ and the residuals are defined as $r_{i,m} = b_i - H_i\tilde{v}_{i,n}^{(m)}$ for $1\leq i\leq k$ and $r_m = a - Gx_n^{(m)}$.
	\label{linear_v}
\end{theorem}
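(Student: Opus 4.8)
The plan is to apply Theorem \ref{gmres} to each of the linear systems $Gx_n = a$ and $H_i\tilde v_{i,n} = b_i$ by exhibiting an explicit polynomial $p_m \in P_m$ with $\|p_m(G)\| \le q^m$ (respectively $\|p_m(H_i)\| \le q^m$). The natural choice is the monomial-type polynomial $p_m(z) = (1-z)^m$, which satisfies $p_m(0)=1$ and hence lies in $P_m$. For the system $Gx_n = a$, note that $I - G = \tau\beta\big(I - 2\sum_{j=1}^k v_{j,n-1}v_{j,n-1}^\top\big)\mathcal L$, so $p_m(G) = (I-G)^m = \big[\tau\beta\big(I - 2\sum_{j=1}^k v_{j,n-1}v_{j,n-1}^\top\big)\mathcal L\big]^m$, and the same structure holds for $H_i$ with $\tau\gamma$, the index-$(i-1)$ reflection, and $J(x_n)$ in place of the corresponding quantities.

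The key estimate is therefore a bound on $\|I-G\|$ and $\|I-H_i\|$. First I would observe that a reflection-type matrix $P := I - 2\sum_{j=1}^m w_j w_j^\top$ built from an orthonormal set $\{w_j\}$ is orthogonal, hence $\|P\| = 1$; this is precisely where the orthonormality of $\{v_{j,n-1}\}_{j=1}^k$ (preserved to machine accuracy by the Gram-Schmidt step, or exactly in the idealized analysis) and of $\{v_{j,n}\}_{j=1}^{i-1}$ is used. Consequently $\|I-G\| \le \tau\beta\|\mathcal L\|$ and $\|I-H_i\| \le \tau\gamma\|J(x_n)\| \le \tau\gamma\max_{\|x\|\le Q_x}\|J(x)\|$, where the last inequality uses the bound $\|x_n\| \le Q_x$ established above \eqref{simp}. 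Under the stated step-size restriction $\tau \le q/\max\{\gamma\max_{\|x\|\le Q_x}\|J(x)\|,\ \beta\|\mathcal L\|\}$, both $\|I-G\|$ and $\|I-H_i\|$ are bounded by $q$.

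Assembling the pieces: by submultiplicativity $\|p_m(G)\| = \|(I-G)^m\| \le \|I-G\|^m \le q^m$, and likewise $\|p_m(H_i)\| \le q^m$. Plugging these into Theorem \ref{gmres} with $A=G$, $b=a$ gives $\|r_m\|/\|a\| \le \inf_{p\in P_m}\|p(G)\| \le q^m$, and with $A=H_i$, $b=b_i$ gives $\|r_{i,m}\|/\|b_i\| \le q^m$ for $2\le i\le k$, which is the claim. (The case $i=1$ involves only a scalar normalization of $\tilde v_{1,n}$ and no linear solve, so it is excluded from the system \eqref{linear_sys}.)

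I do not anticipate a serious obstacle here; the argument is short once the reflection matrix is recognized as orthogonal. The one point requiring a little care is making sure the matrix $I - 2\sum_j v_{j}v_{j}^\top$ is genuinely orthogonal — this needs exact orthonormality of the $v_j$'s. In the idealized scheme analyzed in the preceding sections the Gram-Schmidt step enforces this exactly at the continuous-algebra level, so I would simply invoke that; if one wanted to account for the $O(\tau^2)$ deviations quantified in Theorem \ref{lem2k}, the bound would degrade to $\|I-G\| \le \tau\beta\|\mathcal L\|(1 + O(\tau^2))$ and one would absorb the correction by slightly shrinking $q$, but this refinement is not needed for the statement as written. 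A secondary bookkeeping point is that $\|J(x_n)\| \le \max_{\|x\|\le Q_x}\|J(x)\|$ requires the uniform bound $\|x_n\|\le Q_x$, which is exactly the fact recalled just above \eqref{simp}.
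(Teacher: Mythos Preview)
Your proof is correct and follows essentially the same approach as the paper: choose $p_m(z)=(1-z)^m$ in Theorem~\ref{gmres}, use that $I-2\sum_j v_jv_j^\top$ is an isometry (orthonormality of the $v_j$'s), and bound $\|I-G\|\le\tau\beta\|\mathcal L\|$ and $\|I-H_i\|\le\tau\gamma\|J(x_n)\|$ to conclude. One small inaccuracy in your parenthetical aside: the $i=1$ equation \emph{is} still a linear solve, namely $(I-\tau\gamma J(x_n))\tilde v_{1,n}=b_1$ (the sum over $j<1$ is empty, not the implicit term), so the same argument applies there trivially; this does not affect the proof of the theorem as stated.
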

\begin{remark}
	This theorem indicates that under suitable selection of the time step size $\tau$, the GMRES solver converges rapidly such that only a few iterations are required in practice.
\end{remark}
\begin{proof}
	We first consider the system $H_i\tilde{v}_{i,n} = b_i$. We select the polynomial $p_m(z)=(1-z)^m$ in Theorem \ref{gmres} to obtain
	\begin{equation}
		\frac{\|r_{i,m}\|}{\|b_i\|}\leq \|(I-H_i)^m\|\leq \|I-H_i\|^m.
	\end{equation}
	Since $\{v_j^{(n)}\}_{j=1}^i$ are orthonormal vectors, $I-2\sum_{j=1}^{i-1}v_{j,n}v_{j,n}^{\top}$ preserves the $l^2$ norm such that
	\begin{equation}
		\|I-{H}_i\|\leq \tau\gamma\bigg\|I-2\sum_{j=1}^{i-1}v_{j,n}v_{j,n}^{\top}\bigg\|\|J(x_n)\| = \tau\gamma\|J(x_n)\|.
	\end{equation}
	We incorporate the above two equations to get
	\begin{equation}
		\frac{\|r_{i,m}\|}{\|b_i\|}\leq (\tau\gamma\|J(x_n)\|)^m.
	\end{equation}
	Thus if we choose the time step size $\tau \leq\frac{q}{\gamma\|J(x_n)\|}$ for some $q\in(0,1)$, we reach the convergence estimate $\frac{\|r_{i,m}\|}{\|b_i\|}\leq q^m$. Similarly, for the GMRES solver on the system $Gx_n = a$ we have the following convergence estimate based on Theorem \ref{gmres}
	\begin{equation}
		\frac{\|r_m\|}{\|a\|}\leq \|(I-G)^m\|\leq \|I-G\|^m\leq (\tau\beta\|\mathcal L\|)^m.
	\end{equation}
	If we set $\tau\leq \frac{q}{\beta\|\mathcal L\|_2}$ for some $q\in(0,1)$, we get the estimate of $r_m$ in this theorem. We incorporate the above two convergence results to complete the proof.
\end{proof}

\section{Numerical experiments}
In this section, we carry out numerical experiments to test the convergence rate (denoted by ``CR'' in tables) of the semi-implicit numerical scheme (\ref{FDsadk}) and compare the behavior of the semi-implicit (denoted by ``SI'') and explicit (denoted by ``EX'') schemes in computing the saddle points and constructing the solution landscapes. In practice, we adopt the dimer method \cite{Dimer} with the dimer length $l>0$ to efficiently evaluate the product of $J(x)$ and a vector $v$ as follows
$$J(x)v= \frac{F(x+lv)-F(x-lv)}{2l}+O(l^2)\approx \frac{F(x+lv)-F(x-lv)}{2l}. $$
It is worth mentioning that the numerical analysis results in previous sections still hold true if we substitute the product of $J(x)$ and the vector in the numerical scheme by its dimer approximation since the dimer length could be chosen as $O(\tau)$ such that the reminder $O(l^2)$ of the dimer approximation is indeed the high-order perturbation $O(\tau^2)$. 
\subsection{Accuracy tests}
We consider the saddle dynamics for the Eckhardt surface \cite{Eck}
$$\begin{array}{l}
	\ds E(x_1,x_2)= \text{exp}(-x_1^2-(x_2+1)^2)\\
	\ds\qquad\qquad\qquad+\text{exp}(-x_1^2-(x_2-1)^2)+4 \text{exp}\bigg(-3\frac{x_1^2+x_2^2}{2}\bigg)+\frac{x_2^2}{2}
\end{array} $$
and compute its index-1 saddle point with the initial conditions
$$x_0=
\left[\!\!
\begin{array}{c}
	\ds -3\\
	\ds 2
\end{array}
\!\!\right],~~
v_{1,0}=\frac{1}{\sqrt{5}}
\left[\!\!
\begin{array}{c}
	\ds -1\\
	\ds 2
\end{array}
\!\!\right]
$$
and index-2 saddle point with the initial conditions
$$x_0=
\left[\!\!
\begin{array}{c}
	\ds -3\\
	\ds 2
\end{array}
\!\!\right],~~
v_{1,0}=\frac{1}{\sqrt{5}}
\left[\!\!
\begin{array}{c}
	\ds -1\\
	\ds 2
\end{array}
\!\!\right],~~
v_{2,0}=\frac{1}{\sqrt{5}}
\left[\!\!
\begin{array}{c}
	\ds 2\\
	\ds 1
\end{array}
\!\!\right].
$$
As the exact solutions to the high-index saddle dynamics are not available, numerical solutions computed under $\tau=2^{-13}$ serve as the reference solutions. We set $\beta=\gamma=T=1$ for simplicity. Numerical results are presented in Tables \ref{table1:1}-\ref{table1:2}, which demonstrate the first-order accuracy of the semi-implicit scheme (\ref{FDsadk}) as proved in Theorem \ref{thmevk}.

\begin{table}[h]
	\setlength{\abovecaptionskip}{0pt}
	\centering
	\caption{Convergence rates of computing index-1 saddle point of Eckhardt surface.}
	\begin{tabular}{c|cc|cc} \cline{1-5}
		$\tau$& $\max_n \|e^x_n\|$ & CR &  $\max_n \|e^{v_1}_n\|$ &CR\\
		\cline{1-5}		
		1/32&	1.62E-02&		&3.55E-03	&\\
		1/64&	8.02E-03&	1.01&	1.75E-03&	1.02\\
		1/128&	3.97E-03&	1.02&	8.65E-04&	1.02\\
		1/256&	1.95E-03&	1.03&	4.25E-04&	1.03\\
		\hline
	\end{tabular}
	\label{table1:1}
\end{table}

\begin{table}[h]
\setlength{\abovecaptionskip}{0pt}
\centering
\caption{Convergence rates of computing index-2 saddle point of Eckhardt surface.}
\vspace{-0.25em}	
\begin{tabular}{c|cc|cc|cc} \cline{1-7}
	$\tau$& $\max_n \|e^x_n\|$ & CR &  $\max_n \|e^{v_1}_n\|$ &CR& $\max_n \|e^{v_2}_n\|$ &CR\\ \cline{1-7}		
	1/32&	1.16E-02&		&2.83E-03	&	&2.83E-03	&\\
	1/64&	5.74E-03&	1.02&	1.40E-03&	1.02&	1.40E-03&	1.02\\
	1/128&	2.84E-03&	1.02&	6.91E-04&	1.02&	6.91E-04&	1.02\\
	1/256&	1.39E-03&	1.03&	3.39E-04&	1.03&	3.39E-04&	1.03\\
	\hline
\end{tabular}
\label{table1:2}
\end{table}

We then consider the saddle dynamics for the stingray function \cite{Gra}
$$E(x_1,x_2)=x_1^2+(x_1-1)x_2^2 $$
and compute its index-1 saddle point with the initial conditions
$$x_0=
\left[\!\!
\begin{array}{c}
	\ds 0\\
	\ds 1
\end{array}
\!\!\right],~~
v_{1,0}=\frac{1}{\sqrt{5}}
\left[\!\!
\begin{array}{c}
	\ds 1\\
	\ds 2
\end{array}
\!\!\right]
$$
and index-2 saddle point with the initial conditions
$$x_0=
\left[\!\!
\begin{array}{c}
	\ds 0\\
	\ds 1
\end{array}
\!\!\right],~~
v_{1,0}=\frac{1}{\sqrt{5}}
\left[\!\!
\begin{array}{c}
	\ds 1\\
	\ds 2
\end{array}
\!\!\right],~~
v_{2,0}=\frac{1}{\sqrt{5}}
\left[\!\!
\begin{array}{c}
	\ds -2\\
	\ds 1
\end{array}
\!\!\right].
$$
The parameters are chosen as before and  numerical results are presented in Tables \ref{table2:1}-\ref{table2:2}, which again show the first-order accuracy of the semi-implicit scheme (\ref{FDsadk}) as proved in Theorem \ref{thmevk}.

\begin{table}[h]
	\setlength{\abovecaptionskip}{0pt}
	\centering
	\caption{Convergence rates of computing index-1 saddle point of stingray function.}
	\vspace{-0.25em}	
	\begin{tabular}{c|cc|cc} \cline{1-5}
		$\tau$& $\max_n \|e^x_n\|$ & CR &  $\max_n \|e^{v_1}_n\|$ &CR\\
		\cline{1-5}		
		1/32&	3.63E-02&		&8.12E-03	&\\
		1/64&	1.78E-02&	1.03&	4.14E-03&	0.97\\
		1/128&	8.78E-03&	1.02&	2.08E-03&	1.00\\
		1/256&	4.31E-03&	1.03&	1.03E-03&	1.01\\
		\hline
	\end{tabular}
	\label{table2:1}
\end{table}

\begin{table}[h]
	\setlength{\abovecaptionskip}{0pt}
	\centering
	\caption{Convergence rates of computing index-2 saddle point of stingray function.}
	\vspace{-0.25em}	
	\begin{tabular}{c|cc|cc|cc} \cline{1-7}
		$\tau$& $\max_n \|e^x_n\|$ & CR &  $\max_n \|e^{v_1}_n\|$ &CR& $\max_n \|e^{v_2}_n\|$ &CR\\ \cline{1-7}		
		1/32&	8.69E-02&		&   2.04E-02&	    &    2.04E-02	&\\
		1/64&	4.55E-02&	0.93&	1.04E-02&	0.98&	1.04E-02	&0.98\\
		1/128&	2.31E-02&	0.97&	5.19E-03&	1.00	 &   5.19E-03&	1.00\\
		1/256&	1.15E-02&	1.00	&   2.57E-03&	1.02	&2.57E-03&	1.02\\
		\hline
	\end{tabular}
	\label{table2:2}
\end{table}		

\subsection{Comparison between SI and EX schemes in finding saddle points}
In this experiment we compare the behavior of semi-implicit (denoted by ``SI'') and explicit (denoted by ``EX'') schemes (\ref{FDsadk}) and (\ref{schemeexp}) based on a Rosenbrock type function
$$E_R(x_1,x_2)=a(x_2-x_1^2)^2+b(1-x_1)^2. $$

\textit{Comparison 1: Pathway convergence.}
Let $(a,b)=(-30,0.5)$ and in this case $x_*:=(1,1)$ is an index-1 saddle point. The Hessian of $E_R$ at $x_*$ has two eigenvalues $-0.20$ and $299.20$, which leads to the Hessian condition number about $-1496$. We select the initial values as
\begin{equation}\label{testini}
	x_0=
	\left[\!\!
	\begin{array}{c}
		\ds 0.5\\
		\ds 0.5
	\end{array}
	\!\!\right],~~
	v_{1,0}=\frac{1}{\sqrt{2}}
	\left[\!\!
	\begin{array}{c}
		\ds 1\\
		\ds 1
	\end{array}
	\!\!\right]
\end{equation}
and compute three curves in Figure \ref{fig1}(left), which shows that though the numerical solutions computed under both the semi-implicit and explicit schemes could reach the saddle point $x_*$, the SI solution provides a much better approximation for the reference solution of the searching pathway (i.e. EX solution under $\tau=1/3000$ shown in Figure \ref{fig1}(left)) than the EX solution under the same step size $\tau=1/300$. Furthermore, the EX solution under $\tau=1/3000$ and the IS solution under $\tau=1/300$ have almost the same curves, which indicates that the semi-implicit scheme admits a much larger step size (10 times of that for explicit scheme in this example).

\begin{figure}[h!]
	\setlength{\abovecaptionskip}{0pt}
	\centering	\includegraphics[width=2.3in,height=2.1in]{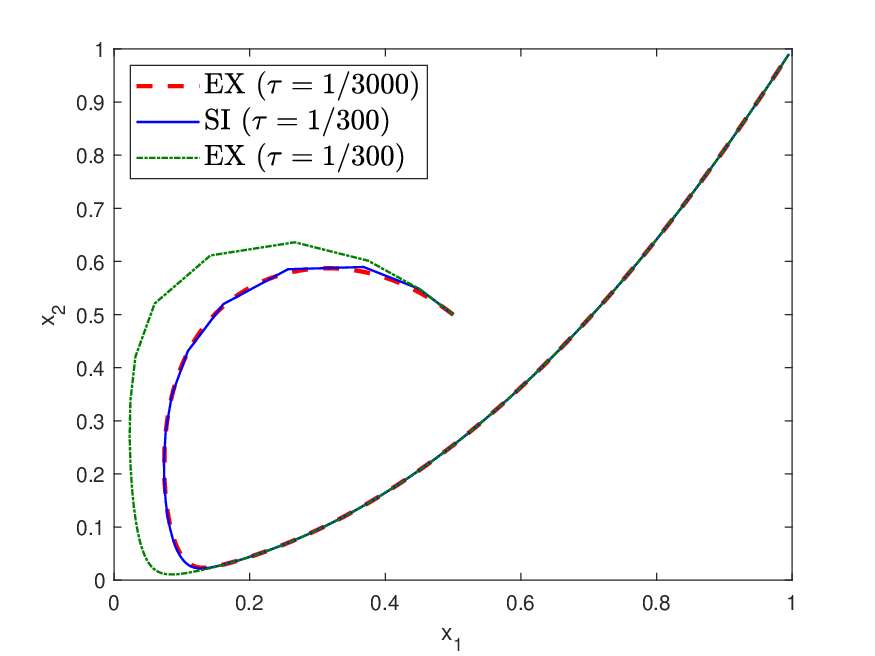}
	\includegraphics[width=2.3in,height=2.1in]{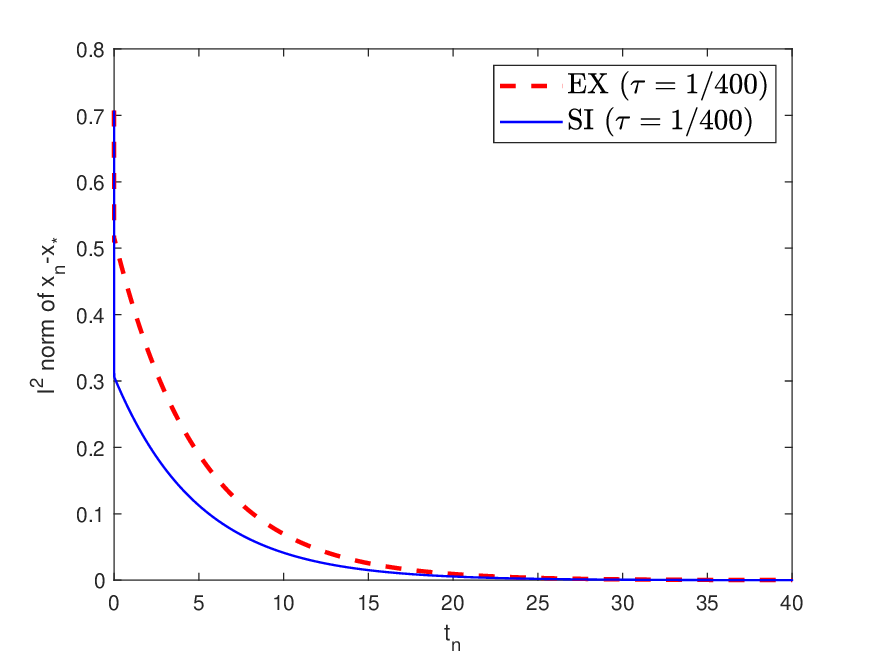}
	\caption{(left) Numerical trajectories of $x(t)=(x_1(t),x_2(t))$ under $T=20$ and different step sizes and schemes; (right) Plots of $\|x_n-x_*\|$ under $T=40$ and different schemes.}
	\label{fig1}
\end{figure}

\begin{figure}[h!]
	\setlength{\abovecaptionskip}{0pt}
	\centering	\includegraphics[width=2.3in,height=2.1in]{plotr1.eps}
	\includegraphics[width=2.3in,height=2.1in]{plotr2.eps}
	\caption{(left) Numerical trajectories of $x(t)=(x_1(t),x_2(t))$ under $T=20$ and different step sizes and schemes; (right) Plots of $\|x_n-x_*\|$ under $T=40$ and different schemes.}
	\label{fig1}
\end{figure}
\textit{Comparison 2: Convergence to saddle point.}
We set $(a,b)=(0.1,-300)$ in $E_R(x,y)$ and $x_*=(1,1)$ is again an index-1 saddle point. The Hessian of $E_R$ at $x_*$ has two eigenvalues $-0.20$ and $599.20$, which implies a large Hessian condition number about $-2992$. We choose the same initial values as (\ref{testini}) and present the distance $\|x_n-x_*\|$ computed under EX and SI in Figure \ref{fig1}(right), which indicates that under the same step size $\tau=1/400$, the SI solution approaches the saddle point $x_*$ much faster than the EX solution. All the observations in this section demonstrate the advantages of the proposed semi-implicit scheme compared with the commonly-used explicit scheme.

\subsection{Comparison between SI and EX schemes in constructing solution landscapes}
We compare the solution landscapes of the following Allen-Cahn equation computed by both methods
\begin{equation}\label{Allen Cahn}
	\dot{u} =F(u):=\kappa u_{xx}+u-u^3,
\end{equation}
where $-1\leq u\leq1$ is an order parameter and $\kappa$ is the diffusion coefficient. The computation region is $[0,1]$ with the uniform mesh size $2^{-7}$ in discretization of spatial operators. For any $\kappa>0$, $u_1\equiv1,u_{-1}\equiv-1$ and $u_0\equiv0$ are three stationary solutions of equation (\ref{Allen Cahn}). It is clear that both $u_1$ and $u_{-1}$ are steady states, while $u_0$ is the highest-index saddle point with different index for different $\kappa$. Using $u_1$ as the root state, we compute the solution landscape by the upward search \cite{YinSCM} with both semi-implicit schemes and explicit schemes, where the multiplications of the Hessian and the vector in schemes are approximated by the dimer method \cite{YinSISC}. Due to the high dimension of the problem, the GMRES method is applied for solving the SI scheme.

We first compare the largest step sizes $\tau$ in both methods that guarantee the convergence of the schemes under different $\kappa$ and indexes of latent saddle points (denoted by ``$k$-SD''). The numerical results are shown in the Table \ref{step}, which indicates that much larger time steps could be used in the SI scheme than those in the EX scheme, e.g. the former could be hundreds or even thousands of times more than the later. Furthermore, for a fixed $\kappa$, the maximum steps of the EX scheme keep nearly unchanged with the increment of the index, while those for the SI scheme become larger, which suggests that the SI scheme has greater advantage (more stable and efficient) when finding saddle points with higher index.

\begin{table}[h]
	\setlength{\abovecaptionskip}{0pt}
	\centering
	\caption{Maximal step sizes $\tau$ for the SI and EX schemes.}
	\begin{tabular}{c|c|c|c|c|c}
		\hline
		SI& $\kappa=0.02$ & $\kappa=0.005$ & $\kappa=0.0025$ & $\kappa=0.00125$ & $\kappa=0.001$ \\ \hline
		11-SD &               &                &                 &                  & 3.83           \\
		9-SD   &               &                &                 & 2.82             & 2.56           \\
		7-SD   &               &                & $>5$            & 2.62             & 1.86           \\
		5-SD   &               & $>$5           & 3.63            & 1.15             & 7.44E-01        \\
		3-SD   & 2.78          & 2.12           & 1.52            & 5.91E-01          & 5.61E-01        \\
		1-SD  & 1.03          & 1.00           & 6.97E-01        & 5.02E-01          & 5.02E-01        \\ \hline
	\end{tabular}
	
	\begin{tabular}{c|c|c|c|c|c}
		\hline
		EX      & $\kappa=0.02$ & $\kappa=0.005$ & $\kappa=0.0025$ & $\kappa=0.00125$ & $\kappa=0.001$ \\ \hline
		11-SD  &               &                &                 &                  & 1.24E-02        \\
		9-SD &               &                &                 & 9.94E-03          & 1.23E-02        \\
		7-SD   &               &                & 4.95E-03         & 9.91E-03          & 1.23E-02        \\
		5-SD   &               & 2.47E-03        & 4.95E-03         & 9.88E-03          & 1.23E-02        \\
		3-SD   & 6.14e-4       & 2.47E-03        & 4.93E-03         & 9.84E-03          & 1.23E-02        \\
		1-SD   & 6.14e-4       & 2.47E-03        & 4.93E-03         & 9.81E-03          & 1.23E-02        \\ \hline
	\end{tabular}
\label{step}
\end{table}

We then compare the solution landscapes with different $\kappa$ and numerical schemes in Figure \ref{land}. Each image in Figure \ref{land} represents a stationary solution. Because of the periodic boundary condition, a stationary solution after translation is still a stationary solution, and only one phase is presented. Each column with several images displays a solution landscape. We present three pairs of columns with different $\kappa$ in Figure \ref{land}, and the two columns in each pair are solution landscapes computed by different schemes. For $\kappa=0.02, 0.0025, 0.001$, the step sizes for EX scheme are 0.0005, 0.004, 0.01, respectively, while  we pick $\tau=0.4$ for SI scheme for all cases. It is clear from Figure \ref{land} that both methods generate almost the same solution landscapes, while the SI scheme admits much larger step size.
\begin{figure}[h!]
	\setlength{\abovecaptionskip}{0pt}
	\centering	\includegraphics[width=4.8in,height=4.8in]{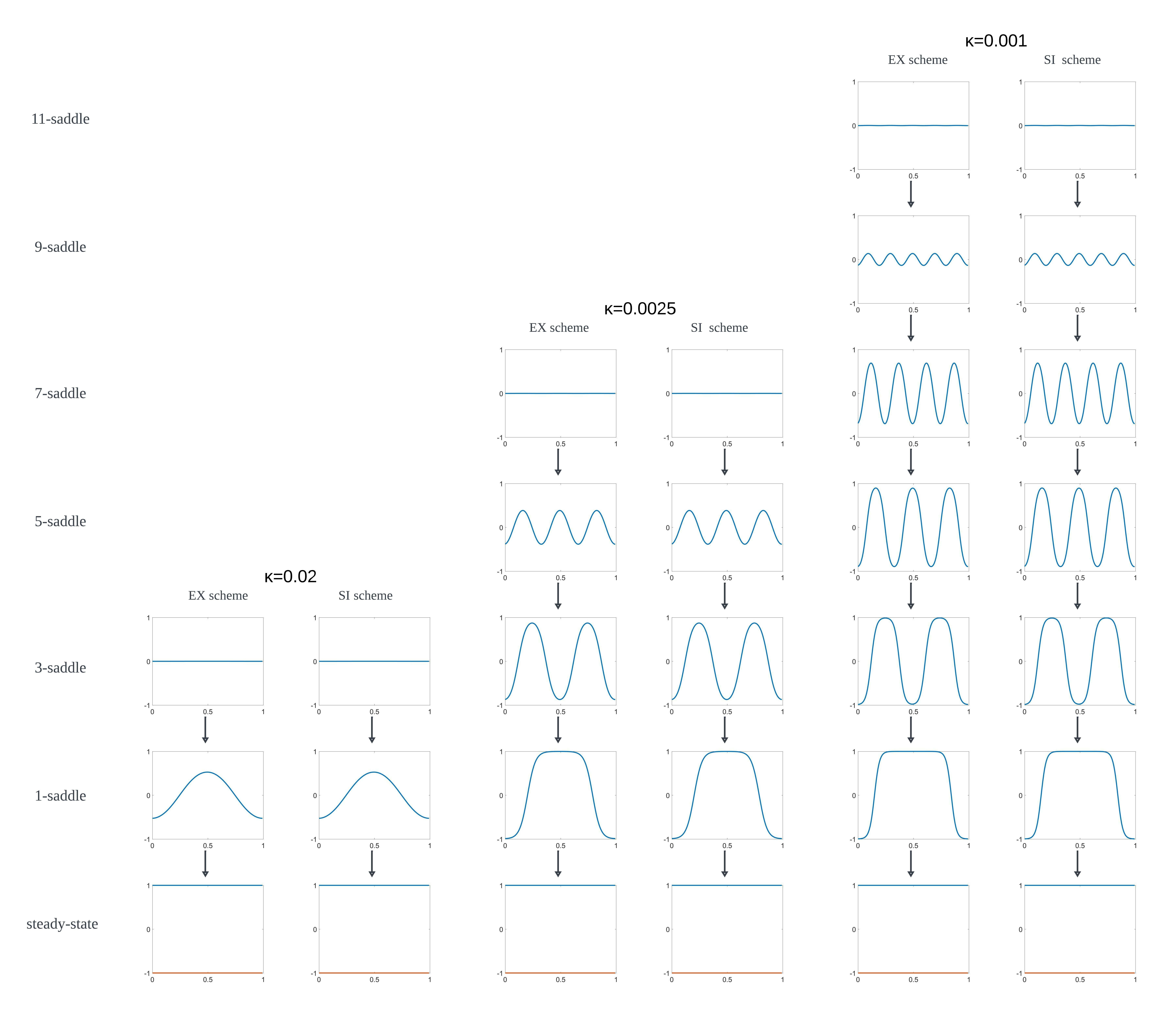}
	\caption{Comparison of solution landscapes computed by SI and EX schemes for the phase field model (\ref{Allen Cahn}) under different $\kappa$. For $\kappa=0.02, 0.0025, 0.001$, the step sizes for EX scheme are 0.0005, 0.004, 0.01, respectively, while $\tau=0.4$ for SI scheme for all cases. All these step sizes are chosen based on the maximal step sizes proposed in Table \ref{step}.}
	\label{land}
\end{figure}

To further demonstrate the advantages of the SI method, we compare the CPU times (denoted by ``CPU''), the numbers of iterations (denoted by ``$N_{iter}$'') and the number of queries of $F$ (denoted by ``$N_F$'') in the two schemes until the convergence of the algorithm (the stopping criteria is always chosen as $\|F\|\leq 10^{-4}$). Here the $N_{iter}$ for EX scheme implies the number of time steps until convergence. In practical problems, evaluating $F$ could be expensive or time-consuming and thus we expect to reduce $N_F$ by SI method. We fix $\kappa=0.001$ and the step sizes for SI and EX schemes are chosen as 0.4 and 0.012, respectively, where the later is nearly the maximal allowed step given in Table \ref{step}. Numerical experiments are presented in Table \ref{eval}, which indicate that the SI method significantly reduces the CPU times and numbers of iterations and queries of $F$, which fully demonstrate the advantages of the SI method.
\begin{table}[h]
	\setlength{\abovecaptionskip}{0pt}
	\centering
	\caption{Comparison of CPU, $N_{iter}$ and $N_F$ between the SI and EX schemes.}
	\begin{tabular}{c|ccc|ccc}
		\hline
		& \multicolumn{3}{c}{EX with $\tau=0.012$} & \multicolumn{3}{|c}{SI with $\tau=0.4$} \\
		\hline
		Index & $N_{iter}$    & $N_F$ &CPU(s)   & $N_{iter}$   & $N_F$ &CPU(s)  \\ \hline
		11  & {35148}   & 808404 &52.7  & {1047}   & 170684 &14.2 \\
		9   & {16147}   & 306793 &21.3  & {479}    & 75221  &5.94 \\
		7   & {6559}    & 98385  &5.76 & {234}    & 43707  &3.64 \\
		5   & {26764}   & 294404 &16.9  & {988}    & 123447 &10.9 \\
		3   & {66305}   & 464135 &26.0  & {2511}   & 100338 &10.8 \\
		1   & {81567}   & 244701 &14.3  & {3116}   & 94044  &9.97 \\ \hline
	\end{tabular}
	\label{eval}
\end{table}

\section{Concluding remarks}
In this paper we prove the error estimates for the semi-implicit scheme of high-index saddle dynamics and the convergence of the GMRES solver, which provides theoretical supports for numerical implementation of saddle dynamics in, e.g., constructing the solution landscape. The main contribution lies in developing novel analysis such as the multi-variable circulating induction procedure in Theorem \ref{lem2k} to accommodate the ``loss of orthonormal property'' on the schemes of $\tilde v_{i,n}$ in (\ref{FDsadk}) and the coupling between the computation of eigenvectors and the orthonormalization procedure. Extensive numerical experiments are carried out from different aspects to compare the explicit and semi-implicit schemes and demonstrate the advantages of the later.

It is worth mentioning that the developed methods could be naturally extended to analyze the semi-implicit numerical scheme of generalized high-index saddle dynamics for dynamic (non-gradient) systems \cite{GuZho,YinSCM}, i.e.,
\begin{equation}\label{ngsadk}
	\left\{
	\begin{array}{l}
		\ds \frac{dx}{dt} =\bigg(I -2\sum_{j=1}^k v_jv_j^\top \bigg)F(x),\\[0.075in]
		\ds \frac{dv_i}{dt}=(I-v_iv_i^\top)\mathcal J(x)v_i -\sum_{j=1}^{i-1}v_jv_j^\top(\mathcal J(x)+\mathcal J(x)^\top)v_i,~~1\leq i\leq k.
	\end{array}
	\right.
\end{equation}
Here $\mathcal J(x)$ refers to the Jacobian of $F(x)$, which is in general not symmetric. Compared with the high-index saddle dynamics (\ref{sadk}) for the gradient systems with the symmetric Hessian $J(x)$, $2J(x)$ in the dynamics of $\{v_i\}_{i=1}^k$ in (\ref{sadk}) is substituted by the symmetrization $\mathcal J(x)+\mathcal J(x)^\top $, which is the key to ensure the validity of the preceding derivations, especially the Lemma \ref{lemm1}. To be specific, for the generalized high-index saddle dynamics (\ref{ngsadk}), the last right-hand side term of (\ref{conc1}) will become
$$-\tau\gamma v_{m,n}^\top (\mathcal J(x_n)+\mathcal J(x_n)^\top) \tilde v_{i,n}, $$
where the $\mathcal J(x_n)$ and $\mathcal J(x_n)^\top$ exactly match those in $A_1$ and $A_3$, respectively. Consequently the $B_1$ and $B_2$ in (\ref{conc2}) become
$$\begin{array}{l}
	\ds B_1+B_2=\tau\gamma\big(v_{m,n-1}^\top \mathcal J(x_n)\tilde v_{i,n}-v_{m,n}^\top \mathcal J(x_n) \tilde v_{i,n} \big)\\[0.1in]
	\ds\qquad\qquad\qquad+\tau\gamma\big(\tilde v_{m,n}^\top \mathcal J(x_n)^\top v_{i,n-1}-v_{m,n}^\top \mathcal J(x_n)^\top \tilde v_{i,n}\big),
\end{array}  $$
which essentially avoids the differences like $\mathcal J(x_n)^\top - \mathcal J(x_n)$ that do not generate the desired numerical accuracy. In summary, by virtue of the symmetrization, the numerical accuracy is preserved throughout the proof.

There are several other potential extensions of the current work that deserve further exploration. For instance, the techniques could be employed and improved to analyze the semi-implicit numerical scheme for constrained high-index saddle dynamics \cite{CHiSD2021,ZhaDu2}:
\begin{equation}
	\label{ccsd}
	\left\{
	\begin{array}{l}
		\ds \frac{dx}{dt} =\bigg(I -2\sum_{j=1}^k v_jv_j^\top \bigg)F(x),\\[0.075in]
		\ds \frac{dv_i}{dt}=\bigg(I-v_iv_i^\top-2\sum_{j=1}^{i-1}v_jv_j^\top\bigg)\mathcal H(x)[v_i] \\
		\ds\qquad\qquad-A(x)\big(A(x)^\top A(x)\big)^{-1}\bigg(\nabla^2c(x)\frac{dx}{dt}\bigg)^\top v_i,~~1\leq i\leq k.
	\end{array}
	\right.
\end{equation}
Here $c(x)=(c_1(x),\cdots,c_m(x))=0$ represents the $m$ equality constraints and $A(x)=(\nabla c_1(x),\cdots,\nabla c_m(x))$.
In the constrained high-index saddle dynamics (\ref{ccsd}), $\mathcal H(x)$ refers to the Riemannian Hessian, which is difficult to analyze and approximate in practice and thus brings additional difficulties for the numerical analysis that we will investigate in the near future.
\bibliographystyle{amsplain}

\end{document}